\documentclass[art]{amsart}

\synctex=1

\usepackage{amssymb,latexsym}
\usepackage{multirow}
\usepackage{amsfonts}
\usepackage{dsfont}
\usepackage[pdftex]{hyperref}
\usepackage{color}
\usepackage{graphicx}
\usepackage[all]{xy}
\usepackage{youngtab,ytableau}

\theoremstyle{plain}
\newtheorem{thm}[equation]{Theorem}
\newtheorem{lem}[equation]{Lemma}

\newtheorem{question}[equation]{Question}
\newtheorem{example}[equation]{Example}
\newtheorem{THM}{Theorem}
\newtheorem{EXAMPLE}[THM]{Example}
\newtheorem{cor}[equation]{Corollary}
\newtheorem{rem}[equation]{Remark}
\newtheorem{defn}[equation]{Definition}
\numberwithin{equation}{section}


\setlength{\textheight}{21.8truecm}
\setlength{\textwidth}{15.0truecm}
\marginparwidth  0truecm
\oddsidemargin   01truecm
\evensidemargin  01truecm
\marginparsep    0truecm

\newcommand{\C}{\mathbb C}
\newcommand{\fg}{\mathfrak{g}}

\newcommand{\B}{\mathcal{B}}

\definecolor{grey}{RGB}{180, 187, 198}

\begin{document}

\title[Springer Fibers and Schubert Points]{Springer Fibers and Schubert Points}
\author{Martha Precup}
\address{Dept. of Mathematics and Statistics, Washington University in St. Louis, St. Louis, MO 63130}\email{martha.precup@wustl.edu}

\author{Julianna Tymoczko}
\address{Dept. of Mathematics, Smith College, Northampton, MA 01063}\email{jtymoczko@smith.edu}


%
\maketitle

 \begin{abstract}
 
Springer fibers are subvarieties of the flag variety parametrized by partitions; they are central objects of study in geometric representation theory.  Schubert varieties are subvarieties of the flag variety that induce a well-known basis for the cohomology of the flag variety.  This paper relates these two varieties combinatorially.  We prove that the Betti numbers of the Springer fiber associated to a partition with at most three rows or two columns are equal to the Betti numbers of a specific union of Schubert varieties.

\end{abstract}
  
 
 \section{Introduction}

 This paper proves an explicit combinatorial topological relationship between two families of varieties: certain Springer fibers and certain Schubert varieties.  Both are subvarieties of the flag variety $\B$, whose elements in Lie type $A$ can be described as the collection of nested subspaces $V_{\bullet}=(\{0\}\subseteq V_1\subseteq \cdots \subseteq V_n=V)$ where each $V_i$ is an $i$-dimensional subspace of a fixed complex $n$-dimensional vector space $V$.  The flag variety can also be written as the quotient $\B = GL_n(\mathbb{C})/B$ where $B$ is the subgroup of upper-triangular matrices.
 
Springer fibers are the fibers of a particular desingularization of the nilpotent cone inside the space of $n \times n$ matrices. Explicitly, if $X$ is a nilpotent $n \times n$ matrix then the flag $V_{\bullet}$ is in the Springer fiber $\B^X$ if and only if $X(V_i)\subseteq V_i$ for all $i=1,...,n$.  In other words $\B^X$ consists of all flags that are stable under the operator $X$.  The cohomology of each Springer fiber carries a natural action of the symmetric group that is one of the seminal constructions of geometric representation theory.  Let $\lambda$ be the partition of $n$ determined by the Jordan blocks of $X$.  Springer showed that the top-dimensional cohomology of $\B^X$ is the irreducible representation of $S_n$ corresponding to $\lambda$  \cite{Sp, Sp2} and in fact every irreducible representation of $S_n$ can be obtained this way.

Schubert varieties are subvarieties of the flag variety parametrized by permutations that induce an important basis for the cohomology of the flag variety.  Their geometry is intrinsically connected to the combinatorics of the  symmetric group \cite{F, BL}.   To start, permutations index the double-coset decomposition $\B = \bigsqcup BwB/B$.  Each double coset is an affine cell $C_w = BwB/B$ in the flag variety that contains the permutation flag $wB$ and is called a Schubert cell.  The closure relations between Schubert cells are determined by the Bruhat order on $S_n$ and the dimension of the Schubert variety $\overline{C}_w$ is given by the number of inversions of $w$.  Moreover the cohomology classes of Schubert varieties are given by the Schubert polynomials \cite{BGG}, which are important objects in algebraic combinatorics and representation theory.  The study of Schubert varieties and polynomials fundamentally relates results in geometry, combinatorics, Lie theory, and representation theory; see \cite{BP, CK, Ku} for just a few examples.

The geometry of Springer fibers is much more complicated than Schubert varieties and little is known in general.  Springer fibers are pure dimensional \cite{S} and there are combinatorial formulas for their Betti numbers \cite{Fr, T}.  Because of the challenges of studying $\B^X$ the rest of the literature focuses on special cases, especially when the partition $\lambda$ has two rows, two columns, or is a hook \cite{DH, Fu, Fr2, FM}.  Other work has analyzed the irreducible components of $\B^X$ that are known to be smooth \cite{GZ}.   However even in these special cases, Springer fibers are not fully understood.  For example, there is no general characterization for the closures of the cells paving $\B^X$ even when $\lambda$ has two columns.
  
In this paper we show that if the partition $\lambda$ has at most three rows or two columns then the Betti numbers of $\B^X$ coincide with the Betti numbers of a particular union of Schubert varieties.  We make this correspondence explicit using row-strict tableaux.  The second author showed that the Betti numbers of Springer fibers are enumerated by row-strict Young tableaux of shape $\lambda$ and gave  a combinatorial rule to compute the Betti number corresponding to a given tableau $T$ in \cite{T}.  The number computed from $T$ by this combinatorial rule is called the dimension of $T$.  Using work of Garsia-Procesi and Mbirika \cite{GP, M}, we identify a unique permutation $w_T \in S_n$ associated to each row-strict tableau $T$ with the property that the length $\ell(w_T)$ is the dimension of $T$.  These permutations are called Schubert points.

Our main result is that the Betti numbers of the Springer fibers are the same as the Betti numbers of the union of Schubert varieties corresponding to Schubert points.  

\begin{THM}\label{THM1}  Let $X\in \mathfrak{gl}_n(\C)$ be a nilpotent matrix with Jordan form corresponding to a partition $\lambda$ with at most three rows or two columns.  There is an equality of Poincar\'e polynomials
\[
P(\B^X,t)=P\left( \cup_{wB\in \B^X}\overline{C}_{w_T} , t \right)
\]
where $T$ denotes the row-strict tableau associated to $wB\in \B^X$ and $w_T\in S_n$ is the corresponding Schubert point.   The union simplifies as
\[
	\bigcup_{wB\in \B^X} \overline{C}_{w_T}=\bigcup_{T\in St(\lambda)}\overline{C}_{w_T}
\]
where $St(\lambda)$ denotes the set of standard tableaux of shape $\lambda$.
\end{THM}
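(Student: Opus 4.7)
The plan is to reduce the theorem to two combinatorial claims about Schubert points: (i) the set $W_\lambda := \{w_T : T \text{ a row-strict tableau of shape }\lambda\}$ is an order ideal in Bruhat order, and (ii) the Bruhat-maximal elements of $W_\lambda$ are exactly the Schubert points $w_T$ coming from standard tableaux $T \in St(\lambda)$. The Poincar\'e polynomial statement then drops out formally from the known paving results and from the equality $\ell(w_T) = \dim T$ recorded in the introduction.

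More precisely, I would first unpack both sides of the desired equality. By the second author's result in \cite{T}, the Springer fiber $\B^X$ admits a paving by affines indexed by the row-strict tableaux of shape $\lambda$, so
\[
P(\B^X,t) = \sum_{T} t^{2\dim T} = \sum_{T} t^{2\ell(w_T)}.
\]
On the other hand, a union of Schubert varieties is paved by the Schubert cells it contains, and $C_v \subseteq \overline{C}_{w_T}$ if and only if $v \leq w_T$ in Bruhat order, so
\[
P\Bigl(\bigcup_T \overline{C}_{w_T}, t\Bigr) = \sum_{v \leq w_T \text{ for some }T} t^{2\ell(v)}.
\]
Comparing the two sums, the equality reduces exactly to showing that the assignment $T \mapsto w_T$ is an injection onto an order ideal in Bruhat order. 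Injectivity is part of the construction recalled from \cite{GP, M}, so the heart of the argument is the order ideal property.

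To prove that $W_\lambda$ is an order ideal, I would proceed case-by-case on the shape of $\lambda$, treating the at-most-three-row case and the at-most-two-column case separately, since the Schubert point construction is cleanest in these restricted shapes. In each case I would take $v < w_T$ covering in Bruhat order and exhibit an explicit row-strict tableau $T'$ with $w_{T'} = v$, using the description of $w_T$ in terms of the column-reading word (or analogous construction) of $T$: a single Bruhat-cover modification corresponds to a local transposition of entries of $T$ that remains row-strict under our shape hypothesis. Once the order ideal property is established, the Poincar\'e polynomial equality follows immediately from the two displayed sums above.

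Finally, for the simplification of the union I would identify the Bruhat-maximal Schubert points inside $W_\lambda$. The description of $w_T$ makes it natural to expect that $w_T$ is Bruhat-maximal among all $w_{T'}$ with $T'$ row-strict of shape $\lambda$ precisely when $T$ is standard, because a non-standard $T$ always admits a local swap of two entries in the same column that produces a row-strict $T'$ with $w_T < w_{T'}$. The main obstacle I anticipate is proving the order ideal property cleanly: the Schubert point associated to $T$ is defined through a particular reading word, and verifying that each Bruhat cover $v \lessdot w_T$ lifts to a row-strict modification of $T$ requires a careful analysis of how Bruhat covers interact with this reading word, and this is exactly where the shape hypothesis on $\lambda$ is needed.
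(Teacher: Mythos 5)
Your proposal is correct and follows essentially the same route as the paper: the reduction to (i) the order-ideal property of the set of Schubert points and (ii) the Bruhat-maximality of the standard-tableau Schubert points is exactly the paper's Theorem \ref{theorem: main inductive proof} plus Theorem \ref{thm: standard tableaux}, with the Poincar\'e polynomial equality then following formally as in Corollary \ref{corollary: poincare polynomials}. The one caveat is that the combinatorial core you defer --- showing that deleting a reflection from $w_T$ still yields a Schubert point of shape $\lambda$ --- is where nearly all the work lies (it is not a ``local transposition'' of entries of $T$ but a propagation of the deletion through the entire monotone factorization, occupying the paper's Sections \ref{section: three-row} and \ref{section: two-column}), and it genuinely fails outside the three-row/two-column hypothesis, e.g.\ for $\lambda=(3,1,1,1)$.
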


This theorem proves a conjecture that arises out of work by Harada and the second author \cite{HT}.  That paper proved that the Betti numbers of the Peterson variety are the same as the Betti numbers of the Schubert variety corresponding to a specific permutation, which suggests a similar result for a larger family of varieties called nilpotent Hessenberg varieties.  This was confirmed for regular nilpotent Hessenberg varieties in Lie type $A$ by Mbirika (who described their Betti numbers \cite{M}) and Reiner (who noted that these Betti numbers agree with the Betti numbers of a kind of Schubert variety called a Ding variety \cite{D, DMR}).  Springer fibers are another special case of nilpotent Hessenberg varieties, so our theorem naturally extends these results. 

The second part of Theorem \ref{THM1} also recovers one of the key conclusions of Springer theory: that the top-dimensional components of $\B^X$ are indexed by standard tableaux, implied both by Springer's original work and geometrically by later work of Spaltenstein \cite{S} and others.

In fact our main result is stronger: it is a bijection on all Betti numbers, not just the top-dimensional ones.  While this paper does not stress the geometric context, this is a dimension-preserving bijection between the Schubert cells in the union $\bigcup_{T\in St(\lambda)}\overline{C}_{w_T}$ and a set of affine cells $C_w \cap \B^X$ that partition the Springer variety.

This bijection is particularly simple for the standard tableaux, or top-dimensional case.  To each standard tableau $T$ of shape $\lambda$ we associate the following Schubert point $w_T$.  If $i$ occurs in the $k^{th}$ row of $T$ set 
\[
w_{i-1}:= \left\{ \begin{tabular}{ll} $s_{i-k+1}\cdots s_{i-2}s_{i-1}$ & if $k>1$\\
				$e$ & if $k=1$ \end{tabular}\right. 
\]
where $s_i$ denotes the simple transposition $(i,i+1)$.  Then $w_T= w_{n-1}w_{n-2} \cdots w_2 w_1$ is the product of these strings.  (Definition \ref{defn: Schubert point} describes Schubert points for general row-strict tableaux.)

\begin{EXAMPLE}\label{intro example}  Consider the partition $\lambda=(2,2,1)$ of $5$.  Below we list all standard tableaux of this shape and the corresponding Schubert points.
\begin{center}
\begin{tabular}{l | c | c | c | c | c}
 \empty & \multirow{3}{*}{$\young(12,34,5)$} & \multirow{3}{*}{$\young(12,35,4)$} & \multirow{3}{*}{$\young(13,24,5)$} & \multirow{3}{*}{$\young(13,25,4)$} & \multirow{3}{*}{$\young(14,25,3)$}\\
 $T\in St(2,2,1)$ & & & & &\\
 & & & & &\\\hline
$w_T\in S_5$ & $s_3s_4 s_3 s_2$ & $s_4 s_2s_3 s_2 $ & $s_3s_4 s_3 s_1$ & $s_4 s_2s_3 s_1$ & $s_4s_1s_2s_1$
\end{tabular}
\end{center}
By Theorem \ref{THM1}, if $X$ has Jordan blocks of dimensions $(2,2,1)$ then the Betti numbers of the Springer fiber $\B^X$ are equal to those of the union $\overline{C}_{s_3s_4 s_3 s_2}\cup \overline{C}_{s_4 s_2s_3 s_2} \cup \overline{C}_{s_3s_4 s_3 s_1}\cup \overline{C}_{s_4 s_2s_3 s_1}\cup \overline{C}_{s_4s_1s_2s_1}$.
\end{EXAMPLE}

Our results also strengthen related work of Garsia-Procesi \cite{GP}.  Garsia and Procesi defined a monomial basis for the cohomology ring of each Springer fiber $\B^X$ that Mbirika later bijectively associated to the row-strict tableaux of shape $\lambda$.  In Mbirika's bijection, the degree of each monomial corresponds to the dimension of the corresponding tableau \cite{M}. (Mbirika's work also provides a readable summary of Garsia-Procesi's algorithm that uses the notation of this paper.)   Schubert points are related to Garsia-Procesi's monomials by the rule
\[
\prod_{i=2}^{n} x_{i}^{\ell_{i-1}}   \longleftrightarrow  w_T : \ell(w_{i-1})=\ell_{i-1}.
\]
For instance, the monomials in the previous example are $x_5^2x_4x_3, x_5x_4^2x_3, x_5^2x_4x_2, x_5x_4^2x_2$, and $x_5x_3^2x_2$.  Garsia and Procesi proved that these monomials are closed under the partial order of division: if $x^{\beta}$ is a monomial corresponding to a row-strict tableau and $x^{\alpha} | x^{\beta}$ then $x^{\alpha}$ corresponds to a row-strict tableau, too.  Similarly, we prove that if $w_T$ is the Schubert point corresponding to a row-strict tableau and $w' \leq w_T$ in Bruhat order then $w'$ is also the Schubert point corresponding to a row-strict tableau.  What makes our result more powerful is that Bruhat order is stronger than the monomial order. For instance $x_3 \not | x_5x_4^2x_2$ but $s_2 < s_4s_2s_3s_1$ in Bruhat order. Indeed, the monomial order is equivalent to erasing some initial simple transpositions from the string $w_i$ while Bruhat order permits erasing simple transpositions in arbitrary locations.  


The methods and results in this paper are combinatorial.  In a second paper we extend these results to parabolic nilpotent Hessenberg varieties  \cite{PT}.  Despite results in these cases and in the regular nilpotent Hessenberg case, we see no straightforward way of extending these methods to general nilpotent Hessenberg varieties or other Springer fibers. The fact that results of this nature hold in so many cases may indicate some deeper geometric phenomenon, such as the degeneration given by Knutson and Miller in \cite{KM} from a Schubert variety to a collection of line bundles.  However, the geometry of  Springer fibers is much less well understood than that of the Schubert varieties, so new methods will be necessary to find such a degeneration.
  
The next section covers background information on the geometry of Springer fibers and the dimensions of row-strict tableaux.  Section \ref{Schubert Points} describes Schubert points and preliminary properties relating them to the permutation flags in Springer fibers.  We prove the main result, Theorem \ref{theorem: main inductive proof}, in Section \ref{section: outline} using a lemma that is proven for three-row tableaux in Section \ref{section: three-row} and for two-column tableaux in Section \ref{section: two-column}.  Section~\ref{section: open questions} poses two open questions related to the constructions herein.

 {\bf Acknowledgements.}  The first author was partially supported by an AWM-NSF mentoring grant.  The second author was partially supported  by National Science Foundation grants DMS-1248171 and DMS-1362855.  The authors are grateful to an anonymous reviewer for insightful questions, including the two open questions appearing in Section~\ref{section: open questions}.


\section{Geometric background on Schubert varieties and Springer fibers}\label{preliminaries}

This section establishes notation and key definitions about Springer fibers.  
 
Let $B$ be the Borel subgroup of $GL_n(\C)$ consisting of upper-triangular matrices. The projective variety $\B=GL_n(\C)/B$ is the flag variety.  As noted in the introduction, the flag variety can be identified with the set of full flags $V_1 \subseteq V_2 \subseteq \cdots \subseteq V_{n-1}\subseteq V$ in a complex $n$-dimensional vector space $V$.  The Weyl group $W$ is the subgroup of permutation matrices in $GL_n(\C)$.  We can identify $W$ with the symmetric group on $n$ letters $S_n$ via the action on column vectors.  The Weyl group is generated by the simple transpositions $s_i = (i,i+1)$.  The Bruhat order on $W$ is defined by the rule that $v \leq w$ if $v$ can be written as a subword of $w$ when each is expressed in terms of the simple transpositions.  If $w$ factors minimally into simple transpositions as $w = s_{i_1} s_{i_2} \cdots s_{i_{\ell(w)}}$ then $\ell(w)$ is the length of $w$.  The length of $w$ is also equal to the number of inversions of $w$.

The Bruhat decomposition partitions the flag variety $\B=\bigsqcup_{w\in S_n} C_w$ into a union of Schubert cells, each of which is induced by a double coset.  The Schubert cell indexed by $w \in S_n$ is the collection of flags $C_w = BwB/B$.  This is in fact a CW-decomposition and it can be proven that $\overline{C}_w=\bigsqcup_{v\leq w} C_v$ where $\leq$ denotes the Bruhat order and $C_v\cong \C^{\ell(v)}$ for all $v\in S_n$.   (See \cite{BL} for a more thorough introduction.)

This description of the Schubert cells allows one to calculate the Poincar\'e polynomial of Schubert varieties using the combinatorics of permutations, as shown in the following example.

\begin{example}\label{schubert cell union}  Let $G=GL_5(\C)$ and consider the union of Schubert varieties from Example \ref{intro example}, $\overline{X}_{s_3s_4 s_3 s_2}\cup \overline{X}_{s_4 s_2s_3 s_2} \cup \overline{X}_{s_3s_4 s_3 s_1}\cup \overline{X}_{s_4 s_2s_3 s_1} \cup \overline{X}_{s_4s_1s_2s_1}$.  The set of all permutations less than or equal to each of $s_3s_4 s_3 s_2$, $s_4 s_2s_3 s_2$, $s_3s_4 s_3 s_1$, $s_4 s_2s_3 s_1$, and $s_4s_1s_2s_1$ respectively in Bruhat order is
\begin{itemize}
\item $s_3s_4 s_3 s_2, s_3s_4s_3, s_3s_4s_2, s_4s_3s_2, s_3s_4, s_4s_3, s_4s_2, s_3s_2, s_4, s_3, s_2, e$,
\item $s_4 s_2s_3 s_2, s_4s_2s_3, s_2s_3s_2, s_4s_3s_2 , s_4s_2, s_4s_3, s_3s_2, s_2s_3, s_4, s_3, s_2, e$,
\item $s_3s_4 s_3 s_1, s_3s_4s_3, s_3s_4s_1, s_4s_3s_1, s_3s_4, s_4s_3, s_3s_1, s_4s_1, s_4, s_3, s_1, e$,
\item $s_4 s_2s_3 s_1, s_4s_2s_3, s_4s_2s_1, s_4s_3s_1, s_2s_3s_1, s_4s_2, s_4s_3,  s_2s_3, s_4s_1, s_3s_1, s_2s_1, s_4, s_3, s_2, s_1, e$, and
\item $s_4s_1s_2s_1, s_4s_1s_2, s_4s_2s_1, s_1s_2s_1, s_4s_2, s_4s_1, s_2s_1, s_1s_2, s_4, s_2, s_1 , e$.
\end{itemize}
Therefore
\[
P(\overline{X}_{s_3s_4 s_3 s_2}\cup \overline{X}_{s_4 s_2s_3 s_2} \cup \overline{X}_{s_3s_4 s_3 s_1}\cup \overline{X}_{s_4 s_2s_3 s_1}\cup \overline{X}_{s_4s_1s_2s_1}, t) = 5t^4+11t^3+9t^2+4t+1.
\]
\end{example}

We now define the subvariety of $\B$ that is the main focus of this manuscript.
 
\begin{defn}[Springer fiber] Let $X$ be an $n\times n$ nilpotent matrix.  The Springer fiber $\B^X$ consists of all flags $gB\in \B$ such that $g^{-1} Xg$ is upper-triangular, or equivalently the flags $V_{\bullet} \in \B$ with $XV_i \subseteq V_i$ for all $i \in \{1,2,\ldots,n\}$.   
\end{defn}

Instead of a CW-decomposition, Springer fibers have a partition called an affine paving.  The closure conditions are weaker in an affine paving than a CW-decomposition but the cells and their dimensions still compute Betti numbers.  (Surveys like Fulton's text have more details \cite{F2}.)  If $X$ is chosen appropriately in its conjugacy class, an affine paving of the Springer fiber $\B^X$ is obtained by intersecting with the Schubert cells.  If $wB$ is a permutation flag in $\B^X$ then we call $w$ a Springer permutation.

The Springer fibers corresponding to $X$ and to any conjugate of $X$ are homeomorphic (this has a one-line proof; see, for example, \cite[Proposition 2.7]{T}) so the Betti numbers of $\B^X$ are an invariant of the conjugacy class of $X$.  When $X$ is nilpotent its conjugacy class is given by the sizes of its Jordan blocks, which we encode as a partition $\lambda$ of $n$.  For this reason we refer to the Betti numbers of $\B^\lambda$ in much of this paper.

We now give a combinatorial description of the Springer permutations and the Betti numbers of $\B^\lambda$.  We start with some basic definitions.

\begin{defn}[Partitions and base fillings] \label{defn: highest form}
Let $\lambda=(\lambda_1, \lambda_2, \ldots, \lambda_k)$ be a partition of $n$ drawn as a Young diagram, namely with $k$ rows of boxes so that the $i^{th}$ row from the top has $\lambda_i$ boxes.  

The base filling of $\lambda$ is obtained as follows.  Fill the boxes of $\lambda$ with integers $1$ to $n$ starting at the bottom of the leftmost column and moving up the column by increments of one.  Then move to the lowest box of the next column and so on.  
\end{defn}

\begin{example}\label{example: highest form} Let $n=5$ and $\lambda = (3,2)$.   The base filling of $\lambda$ is:
	\[
		\young(245,13)
	\]
\end{example}

In fact the row-strict tableaux of shape $\lambda$ parameterize Springer permutations, and a quantity like the inversions of a permutation describe the dimensions of the corresponding affine cell \cite[Theorem 7.1]{T}.

\begin{lem}[Tymoczko] \label{paving lemma}  Fix a partition $\lambda$ of $n$ and consider its base filling.  Suppose that $X$ is the matrix  such that $X_{kj}=1$ if $j$ fills a box directly to the right of $k$ and $X_{kj}=0$ otherwise. The Springer fiber $\B^X$ is paved by affines $C_w\cap \B^X$.  The intersection $C_w\cap \B^X$ is nonempty if and only if $wB\in \B^X$ or equivalently if and only if the filling of $\lambda$ given by labeling the $i^{th}$ box in the  base filling of $\lambda$ by $w^{-1}(i)$ is row-strict.  If $T$ denotes that row-strict tableau of shape $\lambda$, the dimension of $C_w\cap \B^X$ is equal to the number of pairs $(p,q)$ such that  $1\leq p < q \leq n$ and
	\begin{enumerate}
		\item $q$ occurs in a box below $p$ and in the same column or in any column strictly to the left of $p$ in $T$, and
		\item if the box directly to the right of $p$ in $T$ is filled by $r_p$, then $q < r_p$.
	\end{enumerate}
\end{lem}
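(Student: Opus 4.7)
The plan is to reduce this to Theorem 7.1 of \cite{T}, which establishes an affine paving of the Springer fiber by Schubert-cell intersections for any nilpotent matrix in so-called \emph{highest form}, together with an explicit combinatorial description of which cells appear and of their dimensions. My first step would be to check that the matrix $X$ in the statement is in highest form with respect to the base filling of $\lambda$: the recipe $Xe_j=e_k$ when $k$ labels the box directly left of $j$ in the base filling (and $Xe_j=0$ otherwise) is exactly the highest-form matrix attached to this filling, so Theorem 7.1 of \cite{T} applies and already gives an affine paving of $\B^X$ by the intersections $C_w\cap \B^X$.

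Next, I would verify the nonemptiness condition by direct computation. The permutation flag $wB$ has $V_i=\mathrm{span}(e_{w(1)},\ldots,e_{w(i)})$, so $wB\in \B^X$ if and only if $XV_i\subseteq V_i$ for every $i$. Since $Xe_j$ is either $0$ or equals $e_k$ with $k$ directly left of $j$ in the base filling, invariance of every $V_i$ amounts to the implication: whenever $w^{-1}(j)\leq i$ also $w^{-1}(k)\leq i$. Letting $T$ be the tableau with $w^{-1}(i)$ in box $i$ of the base filling, this condition on all $i$ translates to the inequality $w^{-1}(k)<w^{-1}(j)$ for every pair of horizontally adjacent boxes, i.e., $T$ is row-strict. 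The harder direction of the ``iff'' — that nonemptiness of $C_w\cap\B^X$ forces $wB\in \B^X$ itself — follows because $wB$ is in the closure of every point of $C_w$ and $\B^X$ is closed; alternatively it is built into the highest-form paving argument of \cite{T}.

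For the dimension formula I would parametrize the Schubert cell $C_w$ as an affine space via its intersection with the unipotent subgroup opposite to the Borel fixing $wB$, whose coordinates correspond to inversions of $w$. The condition that a flag in $C_w$ be $X$-stable becomes an explicit polynomial system in these coordinates. The key structural point, which is the heart of Theorem 7.1 of \cite{T}, is that this system is triangular: one can solve it by sweeping column-by-column through the base filling so that each equation either kills or leaves free exactly one coordinate. Translating the surviving free coordinates back through the tableau bijection, they correspond exactly to pairs $(p,q)$ with $1\leq p<q\leq n$ such that $q$ sits weakly-left-and-strictly-below $p$ in $T$ (condition (1)) and such that the obstruction from the box $r_p$ immediately to the right of $p$ is satisfied, namely $q<r_p$ (condition (2)).

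The main obstacle in making this fully rigorous is the third step: carefully tracking how the stability constraint $XV_i\subseteq V_i$ propagates through the affine coordinates on $C_w$ and verifying that the resulting system truly is upper-triangular, so that $C_w\cap \B^X$ really is an affine cell of the predicted dimension rather than a more complicated variety. This linear-algebraic bookkeeping is the technical core of \cite[Theorem 7.1]{T}, and my plan would be to invoke that result rather than reproduce the argument, restricting my effort here to the translation between the notation of \cite{T} and the base-filling/row-strict-tableau language used in the present paper.
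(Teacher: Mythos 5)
Your proposal matches the paper exactly: the paper offers no proof of this lemma at all, simply citing \cite[Theorem 7.1]{T}, and your plan of checking that $X$ is the highest-form matrix attached to the base filling, translating $X$-stability of the permutation flag into row-strictness of $T$, and invoking the triangular linear system of \cite{T} for the dimension count is precisely that citation plus the correct notational dictionary. One small caveat: the aside that nonemptiness of $C_w\cap\B^X$ forces $wB\in\B^X$ ``because $wB$ is in the closure of every point of $C_w$'' is false as literally stated (points are closed; what one actually needs is a one-parameter subgroup preserving $\B^X$ that retracts $C_w$ onto $wB$, and exhibiting it is part of the content of the paving theorem), but your fallback of deferring that direction to \cite{T} is the right move and is exactly what the paper does.
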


The dimension formula for the intersection $C_w \cap \B^X$ generalizes the formula for $\ell(w)=\dim(C_w)$ as the inversions of $w$.  To see this, read the numbers in the Young diagram of shape $\lambda$ in the order given by the base filling: the pairs $(p,q)$ described by Condition (1) are precisely the inversions of $w$. These pairs $(p,q)$ are used enough to warrant their own terminology.

\begin{defn}
If $(p,q)$ is a pair with $1\leq p < q \leq n$ that satisfies Conditions (1) and (2) of Lemma \ref{paving lemma} for a row-strict tableau $T$ then we call $(p,q)$ a {\em Springer dimension pair} of $T$. \end{defn}

If $T$ is a row-strict tableau of shape $\lambda$, let $T[i]$ be obtained from $T$ by deleting the boxes labeled by $i+1,...,n$.  Since $T$ is row-strict there are no gaps in the rows of $T[i]$, meaning if a box is deleted then all boxes in the same row and to the right must also be deleted.  Therefore the diagram of $T[i]$ forms a composition of $i$. This gives another way to count Springer dimension pairs.

\begin{lem}\label{countingrows}  Let $\ell_{q-1}$ denote the number of Springer dimension pairs of the form $(p,q)$ where $2\leq q \leq n$.  Then $\ell_{q-1}$ is the sum of
\begin{itemize}
\item the number of rows in $T[q]$ above the row containing $q$ and of the same length, plus 
\item the total number of rows in $T[q]$ of strictly greater length than the row containing $q$.
\end{itemize}
\end{lem}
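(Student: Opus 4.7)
Fix $q$ with $2\leq q\leq n$, and let $r$ and $c$ denote the row and column of $q$ in $T$. The plan is to establish a bijection between the Springer dimension pairs $(p,q)$ and the rows of $T[q]$ strictly above row $r$ whose length is at least $c$, and then to split this set of rows according to whether the length equals $c$ or strictly exceeds $c$, recovering the two summands of the lemma.

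\textbf{Two reformulations.} Two observations drive the bijection. First, the row of $T[q]$ containing $q$ has length exactly $c$: since $T$ is row-strict and $q$ occupies position $(r,c)$, every entry of row $r$ strictly to the left of $q$ is smaller than $q$ (and so is retained in $T[q]$) while every entry strictly to the right of $q$ is larger (and so is deleted), leaving only the boxes in columns $1,\dots,c$. Second, for fixed $q$, condition~(2) of the Springer dimension-pair definition is equivalent to saying that $p$ is the rightmost entry of its row in $T[q]$. Indeed, if $p$ has no right neighbor in $T$ then (2) is vacuous and $p$ is trivially rightmost in its row of $T[q]$; and if $p$ has a right neighbor of value $r_p$ in $T$, then (2) asks $r_p>q$, which is exactly the statement that this neighbor is removed in passing from $T$ to $T[q]$.

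\textbf{Bijection and counting.} With these in hand, condition~(1) combined with the reformulation of~(2) says that a valid $p$ is precisely the rightmost entry of some row $r_1<r$ of $T[q]$ whose length $L_{r_1}$ is at least $c$; the column inequality in~(1) translates to $L_{r_1}\geq c$ because $p$ sits at column $L_{r_1}$ in $T[q]$, and we compare this to the length $c$ of row $r$ computed above. Conversely, for any row $r_1<r$ of $T[q]$ with $L_{r_1}\geq c$, the rightmost entry $p$ of that row lies in $T[q]$ (so $p\leq q$) and sits in a row different from $r$ (so $p\neq q$, hence $p<q$), and both Springer conditions hold by construction. Therefore $\ell_{q-1}$ equals the number of rows of $T[q]$ above row $r$ of length at least $c$; partitioning this count by length equal to $c$ versus strictly greater than $c$ yields the two summands in the statement.

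\textbf{Main obstacle.} The substantive step in this plan is the reformulation of condition~(2), which trades a local constraint on the single box immediately right of $p$ in $T$ for the cleaner global statement that $p$ is at the right end of its row in $T[q]$. Once that reformulation is available, everything else reduces to a direct counting of long rows above row $r$, and the two cases in the lemma fall out from the obvious decomposition of the set $\{L_{r_1}\geq c\}=\{L_{r_1}=c\}\sqcup\{L_{r_1}>c\}$.
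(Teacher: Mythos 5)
There is a genuine error: you have misparsed Condition (1) of the Springer dimension pair definition in Lemma \ref{paving lemma}, and as a consequence your final count does not match the statement of the lemma. Condition (1) reads ``$q$ occurs in a box below $p$ and in the same column, \emph{or} in any column strictly to the left of $p$''; the second alternative carries no constraint on rows, so a pair $(p,q)$ with $p$ in a row \emph{below} the row of $q$ still qualifies provided $p$'s column is strictly to the right of $q$'s column. (This parsing is forced by the observation, stated right after Lemma \ref{paving lemma}, that the pairs satisfying Condition (1) are exactly the inversions of $w$ read in the base-filling order, which goes up each column from the bottom before moving right; it is also why the second bullet of the lemma says the \emph{total} number of rows of strictly greater length, not the number of such rows above $q$, and why Remark \ref{remark: standard tableau} needs to argue that such lower rows do not exist for standard tableaux.) Your bijection only produces rows $r_1 < r$, so you end with ``(rows above of length $c$) plus (rows above of length $>c$)'' rather than ``(rows above of length $c$) plus (all rows of length $>c$)''; these differ whenever some row below $q$'s row in $T[q]$ is strictly longer than $c$. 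Concretely, for $T = \young(34,12)$ and $q=3$ the pair $(2,3)$ is a Springer dimension pair ($2$ ends its row and sits in column $2$, strictly to the right of $q$'s column $1$), so $\ell_2 = 1$, whereas your count gives $0$ since no rows lie above the first row.

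Everything else is sound and matches the paper's (very terse) proof: your reformulation of Condition (2) as ``$p$ is the rightmost entry of its row in $T[q]$'' is exactly the paper's first sentence, and the observation that $q$'s row in $T[q]$ has length equal to $q$'s column index $c$ is correct. The fix is to apply Condition (1) as written: a rightmost entry $p$ of a row of $T[q]$ of length $L$ satisfies Condition (1) if and only if either $L = c$ and that row lies above $q$'s row (same column, $q$ below $p$), or $L > c$ (then $q$'s column is strictly left of $p$'s column, with no condition on rows). That case split is precisely the two summands of the lemma.
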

\begin{proof}  The tableau $T[q]$ has no boxes filled with numbers greater than $q$ so Condition (2) above is satisfied only when $p$ fills a box at the end of a row in $T[q]$.  The rest of the claim follows from imposing Condition (1). 
\end{proof}

\begin{rem}\label{remark: standard tableau}  When $T$ is a standard tableau, the formula above reduces even further.  The entries in both rows and columns are increasing so there are no rows below the row containing $q$ in $T[q]$ of length greater than or equal to the row containing $q$.  (In other words the diagram of $T[q]$ is a partition.)  Therefore $\ell_{q-1}$ simply counts the number of rows above the row containing $q$.
\end{rem}


\section{Schubert Points and combinatorial results about Springer permutations}\label{Schubert Points}
 
We begin by describing a canonical factorization of permutations and some of its properties.  Using this factorization, we define Schubert points, which are permutations corresponding to row-strict fillings of Young diagrams in a different way than Springer permutations. We then give some properties of Schubert points, including many that were observed by Garsia and Procesi and by Mbirika in their earlier studies of essentially the same objects \cite{GP, M}.

Each element of the symmetric group can be factored canonically into monotone-increasing strings of simple reflections, as detailed below \cite[Corollary 2.4.6]{BB}.

\begin{lem}\label{fact: strings} Each $w\in W$ can be written uniquely as $w=w_{n-1}w_{n-2}\cdots w_2w_1$ where 
	\[
		w_i=s_{k_i} s_{k_i+1} \cdots s_{i-1} s_i \textup{   for each  } i=1,...,n-1
	\] 
and either $w_i=e$ or $k_i$ is a fixed integer with $1\leq k_i \leq i$.   Moreover 
\begin{itemize}
\item $\ell(w)=\ell(w_{n-1})+\ell(w_{n-2}) + \cdots + \ell(w_2)+\ell(w_1) $ and
\item \vspace{0.5em} if $w_i \neq e$ then $\ell(w_{i})=i-k_i+1$.
\end{itemize}
The monomial in $\mathbb{Z}[x_1,x_2,\ldots,x_n]$ associated to this factorization is $x_n^{\ell(w_{n-1})} x_{n-1}^{\ell(w_{n-2})} \cdots x_2^{\ell(w_{1})}$.
\end{lem}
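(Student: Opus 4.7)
The plan is to prove this by induction on $n$, applying the classical parabolic factorization theorem for Coxeter groups to the tower $S_1 \subset S_2 \subset \cdots \subset S_n$, where $S_i$ is identified with the stabilizer of $i+1$ in $S_{i+1}$.

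First I would show that $\{e\} \cup \{s_{k}s_{k+1}\cdots s_i : 1 \leq k \leq i\}$ is a complete system of minimum length representatives for $S_{i+1}/S_i$. Since $S_{i+1}/S_i$ is in bijection with $\{1,2,\ldots,i+1\}$ via $wS_i \mapsto w(i+1)$, it suffices to check (a) the $i+1$ proposed elements are distinct modulo $S_i$, and (b) each has minimum length in its coset. For (a), a direct computation applying the simple reflections right-to-left gives $(s_k s_{k+1}\cdots s_i)(i+1) = k$, while $e(i+1) = i+1$, so all values in $\{1,\ldots,i+1\}$ are hit exactly once. For (b), the minimum length permutation with $w(i+1)=k$ is the one with one-line notation $1,2,\ldots,k-1,k+1,\ldots,i+1,k$, which has $i+1-k$ inversions; on the other hand $s_k s_{k+1}\cdots s_i$ has $i-k+1$ letters and is reduced (the letters are consecutive simple reflections with no braid shortening available, and an inversion count confirms it). This simultaneously verifies the length formula $\ell(w_i) = i - k_i + 1$.

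Next I would invoke the standard parabolic factorization: every $w \in S_{i+1}$ decomposes uniquely as $w = uv$ with $u$ a minimum length representative of the coset $wS_i$ and $v \in S_i$, and $\ell(w) = \ell(u) + \ell(v)$. Applying this at the top level gives a unique factorization $w = w_{n-1} u$ with $w_{n-1}$ of the prescribed form (and specifically $k_{n-1} = w(n)$ when $w(n) < n$, or $w_{n-1} = e$ when $w(n) = n$) and $u \in S_{n-1}$. The inductive hypothesis applied to $u$ then delivers the full factorization $w = w_{n-1} w_{n-2} \cdots w_1$, and iterating the length additivity $\ell(w) = \ell(w_{n-1}) + \ell(u)$ yields $\ell(w) = \sum_{i} \ell(w_i)$. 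Uniqueness propagates inductively from the uniqueness at each stage of the coset decomposition.

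The monomial statement is then purely notational: the tuple $(\ell(w_1),\ldots,\ell(w_{n-1}))$ is simply recorded as the exponent sequence of $x_2^{\ell(w_1)} x_3^{\ell(w_2)} \cdots x_n^{\ell(w_{n-1})}$. The only real obstacle is step (b) above, namely confirming that the strings $s_{k_i} \cdots s_i$ are both reduced and of minimum length in their cosets; this is handled by the explicit one-line notation computation, and the remainder of the argument is bookkeeping on top of a classical result in Coxeter group theory (and in fact the paper cites Björner--Brenti for this normal form).
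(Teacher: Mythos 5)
Your argument is correct: the coset computation $(s_k s_{k+1}\cdots s_i)(i+1)=k$, the inversion count showing each string is the reduced minimal-length representative of its coset in $S_{i+1}/S_i$, and the inductive application of the parabolic factorization $\ell(uv)=\ell(u)+\ell(v)$ together give exactly the stated normal form, its uniqueness, and the length additivity. The paper itself offers no proof of this lemma but cites Bj\"orner--Brenti, Corollary 2.4.6, and your proposal is essentially the standard proof of that cited result, so there is nothing further to reconcile.
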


We call $w_i$ the $i^{th}$ string of $w$.  For example, the longest word in $S_4$ can be written as $s_1s_2s_3s_1s_2s_1$.  In this case the strings are:
\begin{itemize}
\item $w_3=s_1s_2s_3$
\item $w_2=s_1s_2$
\item $w_1=s_1$
\end{itemize} 
so $k_i=1$ for each $i=1, 2, 3$.
 
 Given a row-strict tableau $T$ we construct the associated Schubert point $w_T$ by using Springer dimension pairs to determine $k_i$ for each $i$.  This produces a permutation $w_T$ whose length is the dimension of the affine cell associated to $T$ in the Springer fiber.  
 
\begin{defn}[Schubert points] \label{defn: Schubert point}  Let $wB\in \B^X$ and let $T$ denote the corresponding row-strict tableau as in Lemma \ref{paving lemma}.  For each $2\leq q\leq n$ let $\ell_{q-1}$ be the number of Springer dimension pairs of the form $(p,q)$ of $T$.  Define a string $w_{q-1}$ by
	\[
	w_{q-1}= \left\{ \begin{tabular}{l l} $s_{q-\ell_{q-1}} s_{q-\ell_{q-1}+1}\cdots s_{q-2}s_{q-1}$ & if $\ell_{q-1}\neq 0$\\
									$e$ & if $\ell_{q-1}=0$ \end{tabular}\right.
	\]
so $w_{q-1}$ is a string of length $\ell_{q-1}$ by construction.  Then
	\[
		w_{T}=w_{n-1} w_{n-2} \cdots w_2 w_1
	\]
is the Schubert point associated to $wB\in \B^X$. We also refer to $w_T$ as one of the Schubert points associated to the partition $\lambda$.
\end{defn}

Our definition together with the properties of the canonical factorization and Lemma \ref{paving lemma} imply that 
\[\ell(w_{T})=\ell_{n-1}+\ell_{n-2}+\cdots + \ell_1=\dim(C_w\cap \B^X).\]

Example \ref{intro example} gave one set of Schubert points.  The next example lists Schubert points corresponding to row-strict fillings other than the standard tableaux.

\begin{example} As in Example \ref{intro example}, let $\lambda = (2,2,1)$.  Below are a few of the row-strict diagrams of this shape and the corresponding Schubert points.  Note that each of the following examples is smaller in Bruhat order than one of the permutations in Example \ref{intro example}.
\begin{center}
\begin{tabular}{l  | c | c | c | c | c | c | c | c}
 \empty & \multirow{3}{*}{$\young(23,14,5)$} & \multirow{3}{*}{$\young(13,45,2)$}  & \multirow{3}{*}{$\young(34,12,5)$} & \multirow{3}{*}{$\young(15,24,3)$} & \multirow{3}{*}{$\young(24,13,5)$} & \multirow{3}{*}{$\young(25,34,1)$} & \multirow{3}{*}{$\young(35,14,2)$} &  \multirow{3}{*}{$\young(35,24,1)$}\\
 $T$ row-strict & & & & & & &\\
 & & & & & & &\\\hline
$w_T\in S_5$ & $s_3s_4s_3$ & $s_4s_3s_1$ & $s_3s_4s_2$ & $ s_1s_2 s_1 $ &  $s_3s_4$ & $s_2$ & $ s_1$ & $e$
\end{tabular}
\end{center}
\end{example}

The association between row-strict tableaux and Schubert points is unique, as Mbirika proved \cite[Section 2]{M} using results of Garsia-Procesi \cite{GP}.

\begin{lem}[Mbirika] \label{Schubert point unique} Given either $wB\in \B^X$ or a row-strict tableau $T$ the corresponding Schubert point $w_T$ is unique. 
\end{lem}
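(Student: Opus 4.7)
The plan is to split the statement into two assertions. First, the Schubert point $w_T$ is a well-defined element of $W$: the counts $\ell_{q-1}$ are intrinsic invariants of $T$, so Definition~\ref{defn: Schubert point} produces a single element from $T$ (and equivalently from $wB \in \B^X$ via Lemma~\ref{paving lemma}). The real content of the lemma is the injectivity of the map $T \mapsto w_T$, and that is what the proof should focus on.

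To prove injectivity, the first move is to apply the canonical factorization of Lemma~\ref{fact: strings} to $w_T$. The sequence of strings $(w_{n-1},\ldots,w_1)$, and hence the ordered tuple of lengths $(\ell_{n-1},\ldots,\ell_1)$, is uniquely determined by $w_T$. It therefore suffices to show that the tuple $(\ell_{n-1},\ldots,\ell_1)$ reconstructs $T$.

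For this I would use descending induction on the entries of $T$. Starting from $T[n]=T$ of shape $\lambda$, at stage $q$ assume the composition shape $\mu^{(q)}$ of $T[q]$ is known and read off $\ell_{q-1}$. By Lemma~\ref{countingrows}, $\ell_{q-1}$ is a function only of $\mu^{(q)}$ and the row $r$ containing $q$. If $r$ is forced by this data, we delete $q$ to produce $T[q-1]$, whose composition shape is now known, and iterate; after $n$ steps we have recovered $T$.

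The key combinatorial claim is that, given $\mu^{(q)}$, the map $r \mapsto \ell_{q-1}(r)$ is injective on the set of rows with $\mu^{(q)}_r > 0$. The reformulation I would use is to define a total order on rows by $r \prec r'$ precisely when $\mu^{(q)}_r > \mu^{(q)}_{r'}$, or $\mu^{(q)}_r = \mu^{(q)}_{r'}$ with $r < r'$. The formula in Lemma~\ref{countingrows} then rewrites as $\ell_{q-1}(r) = \#\{r'' : r'' \prec r\}$, exhibiting $\ell_{q-1}$ as the $\prec$-rank of the row containing $q$. Since $\prec$ totally orders the rows, distinct positive-length rows receive distinct ranks, which gives the injectivity and, through the induction, the desired recovery of $T$. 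I expect the reformulation of Lemma~\ref{countingrows} as a rank in a total order to be the only mildly clever ingredient; everything else is direct bookkeeping.
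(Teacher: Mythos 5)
Your proof is correct, but it takes a genuinely different route from the paper's. The paper disposes of this lemma essentially by citation: it invokes Mbirika's result \cite[Theorem 2.2.9]{M} that $T \mapsto \prod_{i=2}^{n} x_i^{\ell_{i-1}}$ is an injection from row-strict tableaux onto the Garsia--Procesi monomials, and then observes that $w_T$ and this monomial determine each other via the uniqueness of the canonical factorization in Lemma \ref{fact: strings}. You instead prove the needed injectivity directly. Your key observation --- that Lemma \ref{countingrows} exhibits $\ell_{q-1}$ as the rank of the row containing $q$ in the total order on rows of $T[q]$ by decreasing length with ties broken by height --- is exactly right, and the descending reconstruction is sound: the rank function of a total order is injective, so $\ell_{q-1}$ and the shape of $T[q]$ determine the row of $q$; since $q$ is the largest entry of the row-strict tableau $T[q]$ it sits at the end of its row, so deleting it yields the shape of $T[q-1]$; and knowing the row of every entry recovers a row-strict tableau, since each row's entries must increase left to right. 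What your argument buys is self-containment: the lemma no longer rests on an external theorem, and the well-definedness half (which is immediate from Definition \ref{defn: Schubert point} and Lemma \ref{paving lemma}) is cleanly separated from the injectivity half, which is all this lemma actually needs. What the paper's route buys is brevity and, incidentally, the surjectivity onto the Garsia--Procesi basis, which is not part of this lemma but is re-cited later in Lemmas \ref{dissolving case} and \ref{dominance ordering}; your argument does not (and need not) reproduce that.
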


\begin{proof} For each $1<q \leq n$ let $\ell_{q-1}$ be the number of dimension pairs $(p,q)$ of $T$ as in Definition \ref{defn: Schubert point}.  Mbirika showed the map $T \mapsto \prod_{i=2}^{n} x_{i}^{\ell_{i-1}}$ from row-strict tableaux to monomials is an injection that surjects onto a set of monomials defined by Garsia and Procesi \cite[Theorem 2.2.9]{M}.  Each Schubert point $w_T$ is uniquely determined by the numbers $\ell_{q-1}$ for $2\leq q\leq n$ so the claim follows.
\end{proof}

The main theorem in Section \ref{section: outline} proves that the set of Schubert points for various $\lambda$ is closed under the Bruhat order.  We end this section with three results that prove special cases of this main theorem.

The first of these results proves that Schubert points corresponding to standard tableaux are maximal with respect to Bruhat order in the set of all Schubert points for a partition $\lambda$.  (This is independent of the partition $\lambda$.)  

\begin{thm}\label{thm: standard tableaux}  
Let $St(\lambda)$ denote the set of standard tableaux of shape $\lambda$.  Then the Schubert points $\{ w_T: T\in St(\lambda) \}$ are maximal with respect to Bruhat order in the set of all Schubert points for $\lambda$.
\end{thm}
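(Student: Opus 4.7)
My plan is to show that every $w_T$ with $T \in St(\lambda)$ achieves a common maximal length $n(\lambda) := \sum_{k=1}^{r}(k-1)\lambda_k$, and that every other Schubert point $w_{T'}$ for $\lambda$ has strictly smaller length. Because Bruhat order is graded by length, this will immediately establish that each such $w_T$ is maximal in the set of all Schubert points for $\lambda$.

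First I would evaluate $\ell(w_T)$ directly for a standard $T$. By Remark \ref{remark: standard tableau}, $\ell_{q-1}(T) = \mathrm{row}_T(q) - 1$. Summing over $q$ (and noting the $q=1$ term vanishes since $\mathrm{row}_T(1)=1$ in a standard tableau) gives
\[
\ell(w_T) \;=\; \sum_{q=1}^{n}\bigl(\mathrm{row}_T(q)-1\bigr) \;=\; \sum_{k=1}^{r}(k-1)\lambda_k \;=\; n(\lambda),
\]
so this length depends only on $\lambda$, not on the choice of standard tableau.

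Next I would prove that for every non-standard row-strict tableau $T'$ of shape $\lambda$, $\ell(w_{T'}) < n(\lambda)$. The quickest route is to recall from Lemma \ref{paving lemma} that $\ell(w_{T'})$ equals the dimension of the affine cell $C_w \cap \B^X$ in the paving of the Springer fiber, together with the classical theorem of Spaltenstein \cite{S} that $\dim \B^X = n(\lambda)$ with top-dimensional cells indexed precisely by standard tableaux. A purely combinatorial alternative would be to iteratively rectify a column inversion in $T'$ by swapping an offending pair of column-adjacent entries, verify that row-strictness is preserved, and track the resulting net increase in the count from Lemma \ref{countingrows}; the procedure terminates at a standard tableau and shows strict length gain at each step.

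Combining the two steps, a hypothetical Bruhat relation $w_T < w_{T'}$ with $T \in St(\lambda)$ and $w_{T'}$ another Schubert point for $\lambda$ would force $\ell(w_{T'}) > n(\lambda)$, which is impossible. Hence each $w_T$ with $T$ standard is maximal. By Lemma \ref{Schubert point unique} distinct standard tableaux yield distinct Schubert points, and since they share the common length $n(\lambda)$ no two are strictly comparable in Bruhat order, confirming the statement for the whole family. I expect the main obstacle, should one wish to avoid invoking Spaltenstein's theorem, to be the combinatorial case analysis in the swap argument: many Springer dimension pairs $(p,q)$ are affected simultaneously when two column-adjacent entries are exchanged, and one must argue that the net effect on the sum $\sum_{q} \ell_{q-1}$ is a strict gain.
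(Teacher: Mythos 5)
Your route is genuinely different from the paper's. The paper argues constructively: given any row-strict $T$, sort each column into increasing order to produce a standard tableau $T'$, check that $T'$ is still row-strict, and use Lemma \ref{countingrows} to show $\ell_{q-1}(T)\leq\ell_{q-1}(T')$ for every $q$, whence $w_T\leq w_{T'}$ string by string. That argument is self-contained and proves something stronger than maximality, namely that \emph{every} Schubert point is Bruhat-dominated by a standard-tableau Schubert point --- which is exactly what Corollary \ref{corollary: poincare polynomials} needs in order to collapse the union of Schubert varieties to the standard tableaux. Your argument instead computes the common length $n(\lambda)=\sum_k(k-1)\lambda_k$ of the standard-tableau Schubert points (this step is correct and clean via Remark \ref{remark: standard tableau}) and then rules out any strictly larger Schubert point by a dimension bound. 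Note that for bare maximality you only need the upper bound $\ell(w_{T'})\leq n(\lambda)$ for all row-strict $T'$; the strict inequality for non-standard $T'$ is not required, though it does give you the pleasant byproduct that no two standard-tableau Schubert points are comparable.

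The one soft spot is how you establish that bound. Your primary route imports Spaltenstein's theorem that $\dim\B^X=n(\lambda)$; that is logically legitimate, but it runs against the grain of the paper, which advertises that it \emph{recovers} the Springer-theoretic fact that the top-dimensional pieces are indexed by standard tableaux rather than assuming it. Your combinatorial alternative (swapping column-adjacent entries and tracking the change in $\sum_q\ell_{q-1}$) is left as a sketch, and you correctly identify it as the hard part: a single swap changes many Springer dimension pairs at once. The efficient way to close that gap is essentially the paper's own device --- sort whole columns at once and compare $T[q]$ with $T'[q]$ row-length by row-length via Lemma \ref{countingrows} --- so the elementary completion of your step 2 more or less reproduces the published proof. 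Finally, be aware that even once completed, your argument proves only the literal statement of Theorem \ref{thm: standard tableaux}; if you intend to feed it into Corollary \ref{corollary: poincare polynomials} you still need the domination statement, since maximality of the standard-tableau points alone does not exclude other, shorter maximal elements of the poset of Schubert points.
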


\begin{proof}  Let $T$ be a row-strict tableau with corresponding Schubert point $w_T$.  We will construct a standard tableau $T'\in St(\lambda)$ such that $w_T \leq w_{T'}$.  In fact let $T'$ be the  tableau we obtain from $T$ by reordering the entries in each column so that they increase from top to bottom.  

We first show that the tableau $T'$ is row-strict.  Suppose $r_i$ is the entry in row $i$ and column $k>1$ of $T'$.   Then $r_i$ is greater than $i-1$ other entries of the $k^{th}$ column in $T$.  Since $T$ is row-strict $r_i$ is greater than at least $i$ distinct entries in the $k-1^{st}$ column of $T$.  Thus $r_i$ is greater than the box to its immediate left in $T'$.  So $T'$ is row-strict and by construction also standard.

We claim that $w_T \leq w_{T'}$.  Consider $T[q]$ and $T'[q]$ for $2\leq q \leq n$.  The number of rows of each length is the same in $T[q]$ as in $T'[q]$ because we obtained $T'$ from $T$ by reordering entries within columns.  In particular the rows in $T[q]$ and $T'[q]$ containing $q$ have equal length.  Thus both $T[q]$ and $T'[q]$ have the same number of rows of strictly greater length than the row containing $q$.  Additionally any row in $T[q]$ above the row containing $q$ and of equal length will end in a box in the same column of $T$ as $q$ and be labeled by a value $p<q$.  Since $T'$ reorders the entries of each column of $T$ to increase from top to bottom, this row will also occur above the row containing $q$ in $T'[q]$--- and there may be more rows of this type in $T'[q]$.  Lemma \ref{countingrows} implies that the number of Springer dimension pairs $(p,q)$ in $T$ is at most the number of Springer dimension pairs $(p,q)$ in $T'$.  In other words $\ell(w_{q-1})\leq \ell(w'_{q-1})$ for all $2\leq q \leq n$.  By construction $w_T\leq w_{T'}$ as desired.
\end{proof}

The second claim is a special case of our main theorem, and a slight modification of results of Garsia-Procesi and Mbirika \cite{GP, M}. 

\begin{lem}\label{dissolving case}  Let $T$ be a row-strict tableau of shape $\lambda$ and denote the corresponding Schubert point by $w_T =w_{n-1}w_{n-2}\cdots w_2w_1$.  Suppose that $w'$ is a permutation of the form $w'=w'_{n-1} w'_{n-2} \cdots w'_2 w'_1 $ where $w'_i \leq w_{i}$ in Bruhat order for all $i=1,.., n-1$.  Then $w'$ is a Schubert point associated to $\lambda$. 
\end{lem}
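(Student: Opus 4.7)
The plan is to reduce the statement to the division-closure property of Garsia--Procesi monomials, using Mbirika's bijection (Lemma \ref{Schubert point unique}) to shuttle between canonical strings and monomials.

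The first step is to translate the hypothesis $w'_i \leq w_i$ into length data. Each $w_i$ from Lemma \ref{fact: strings} is the product $s_{k_i}s_{k_i+1}\cdots s_i$ of distinct simple reflections in monotone order, so its reduced word is unique and its Bruhat-smaller strings of the form $s_{k'_i}s_{k'_i+1}\cdots s_i$ are exactly the suffixes, corresponding to $k'_i\geq k_i$. Combined with the case $w'_i=e$, this shows $w'_i\leq w_i$ is equivalent to $\ell(w'_i)\leq \ell(w_i)$, and that the $i$-th string $w'_i$ is determined entirely by its length.

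Set $\ell'_{i-1}:=\ell(w'_{i-1})$ and $\ell_{i-1}:=\ell(w_{i-1})$. By the monomial pairing in Lemma \ref{fact: strings},
\[
\prod_{i=2}^{n} x_i^{\ell'_{i-1}}\ \text{ divides }\ \prod_{i=2}^{n} x_i^{\ell_{i-1}}.
\]
Mbirika's map sends $T$ to the right-hand monomial, placing it in the Garsia--Procesi set attached to $\lambda$; the division-closure property of this set (quoted from \cite{GP} in the introduction) then places the left-hand monomial in the same set, and the surjectivity half of Mbirika's bijection yields a row-strict tableau $T'$ of shape $\lambda$ whose Springer dimension-pair counts equal the $\ell'_{i-1}$. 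Since Definition \ref{defn: Schubert point} builds the $i$-th string of $w_{T'}$ from $\ell'_i$ alone, it matches $w'_i$ stringwise, so $w_{T'}=w'$.

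The only genuinely external input is the Garsia--Procesi division-closure theorem, and the rest is a routine translation between Bruhat inequalities on strings and divisibility of associated monomials. The main care needed is the first paragraph's equivalence between Bruhat and length comparisons for strings, which crucially exploits that every canonical string $w_i$ ends at the common rightmost letter $s_i$; without this feature, Bruhat subwords of $w_i$ would not all be strings of the $i$-th type, and the length-only reduction would fail.
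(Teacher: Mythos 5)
Your proposal is correct and follows essentially the same route as the paper: translate the Bruhat inequalities on strings into divisibility of the associated monomials, invoke the Garsia--Procesi division-closure property to stay inside their monomial basis for $\lambda$, and use Mbirika's bijection together with the uniqueness of the canonical string factorization to recover a row-strict tableau $T'$ with $w_{T'}=w'$. The only difference is cosmetic: the paper needs just the one-way implication $w'_i\leq w_i\Rightarrow\ell(w'_i)\leq\ell(w_i)$, whereas you also verify the (true but unneeded) converse.
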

\begin{proof}  We have only to show that there exists a row-strict diagram $T'$ of shape $\lambda$ such that $w_{T'}=w'$.  The monomials associated to $w'$ and $w_T$ according to Lemma \ref{fact: strings} are 
\[
x_{n}^{\ell(w'_{n-1})} x_{n-1}^{\ell(w'_{n-2})} \cdots x_3^{\ell(w'_2)}x_2^{\ell(w'_1)}
 \textup{     and     }
x_{n}^{\ell(w_{n-1})} x_{n-1}^{\ell(w_{n-2})} \cdots x_3^{\ell(w_2)}x_2^{\ell(w_1)} 
\]  
The assumption that $w'_i \leq w_{i}$ implies $\ell(w'_i)\leq \ell(w_i)$ for all $i=1,.., n-1$ so the first monomial divides the second.  Garsia and Procesi proved that it follows that $x_{n}^{\ell(w'_{n-1})} x_{n-1}^{\ell(w'_{n-2})} \cdots x_3^{\ell(w'_2)}x_2^{\ell(w'_1)}$ is an element in their monomial basis for the cohomology of $\B^X$ where $X$ is a nilpotent matrix with Jordan blocks of size $\lambda$ \cite[Proposition 4.2]{GP}.  Let $T'$ denote the row-strict tableau of shape $\lambda$ associated to this monomial by Mbirika \cite[Theorem 2.2.9]{M}.  Lemma \ref{Schubert point unique} thus gives $w_{T'}=w'$. 
\end{proof}

The final result of this section uses dominance order on partitions, which we define below.  

\begin{defn}[Dominance order]
Suppose that $\lambda$ and $\mu$ are two partitions of $n$.  We say $\lambda \geq \mu$
if for each row $i$ we have
\[\lambda_1 + \lambda_2 + \cdots + \lambda_i \geq \mu_1 + \mu_2 + \cdots + \mu_i\]
\end{defn}

Garsia and Procesi showed that divisibility of their monomials respects the dominance order \cite[Proposition 4.1]{GP}, which we restate in our notation below.

\begin{lem} \label{dominance ordering}  Suppose $\lambda, \mu$ are partitions of $n$ with $\lambda \geq \mu$.  If $T$ is a row-strict tableau of shape $\lambda$ associated to Schubert point $w_T$ then there exists a row-strict tableau $T'$ of shape $\mu$ whose Schubert point satisfies $w_{T'}=w_T$. 
\end{lem}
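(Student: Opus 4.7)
The plan is to pass through the Garsia--Procesi monomial basis, using Mbirika's bijection as the bridge between row-strict tableaux and monomials. Given a row-strict tableau $T$ of shape $\lambda$ with Schubert point $w_T = w_{n-1} \cdots w_1$, Lemma \ref{fact: strings} associates to it the monomial
\[
m_T = x_n^{\ell(w_{n-1})} x_{n-1}^{\ell(w_{n-2})} \cdots x_2^{\ell(w_1)}.
\]
By Mbirika's result \cite[Theorem 2.2.9]{M}, as used in the proof of Lemma \ref{Schubert point unique}, $m_T$ lies in the Garsia--Procesi monomial basis for the cohomology of $\B^X$ where $X$ has Jordan type $\lambda$.

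Next, I would invoke Garsia--Procesi's Proposition 4.1: because $\lambda \geq \mu$ in dominance order, the Garsia--Procesi monomial basis corresponding to $\lambda$ is contained in the one corresponding to $\mu$, so $m_T$ is also an element of the basis for $\mu$. Applying Mbirika's bijection in the reverse direction for shape $\mu$ produces a row-strict tableau $T'$ of shape $\mu$ whose associated monomial is exactly $m_T$. Finally, since the monomials agree, reading off exponents gives $\ell(w'_{i-1}) = \ell(w_{i-1})$ for every $i$; because each string in the canonical factorization of Lemma \ref{fact: strings} is uniquely determined by its length, this forces $w_{T'} = w_T$, as desired.

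The main obstacle is stylistic rather than substantive: one needs to check that Garsia--Procesi's Proposition 4.1, phrased in terms of monomial divisibility respecting dominance order, is equivalent to the containment of monomial bases I have invoked above. Assuming that translation, the argument is a purely formal round trip through Mbirika's bijection and contains no new combinatorial content of its own; indeed this is why the excerpt describes the statement as a mild restatement of Garsia--Procesi's result in the language of Schubert points.
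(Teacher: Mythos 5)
Your proposal is correct and follows essentially the same route as the paper's own proof: both pass from $T$ to its Garsia--Procesi monomial, use \cite[Proposition 4.1]{GP} to see that dominance $\lambda \geq \mu$ places that monomial in the basis for $\mu$, apply Mbirika's construction to produce $T'$ of shape $\mu$, and conclude $w_{T'}=w_T$ because the canonical factorization is determined by the string lengths. Nothing further is needed.
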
 

\begin{proof}  Let $x^{\alpha}$ denote the monomial corresponding to $w_T$.  Garsia and Procesi proved that if $\lambda \geq \mu$ then $x^{\alpha}$ is also a monomial in the basis for the cohomology of $\B^X$ where $X$ is a nilpotent matrix with Jordan blocks of size $\mu$ \cite[Proposition 4.1]{GP}.  Mbirika showed how to construct a row-strict tableau $T'$ of shape $\mu$ with monomial $x^{\alpha}$ \cite[Proof of Theorem 2.2.9]{M}.  Let $w_{T'}$ be the unique Schubert point associated to the tableau $T'$ of shape $\mu$ by Lemma \ref{Schubert point unique}.  Then $w_{T'}=w_{T}$ since both have the same monotone-increasing factorization.
\end{proof}


\section{Outlining the main theorem}\label{section: outline}

In this section we outline and prove the essential lemmas of the main theorem.  The key step in the proof of the main theorem is to carefully follow what happens after one simple reflection is erased from  the monotone-increasing factorization of a Schubert point.  In general erasing one simple reflection produces an extra monotone-increasing string and a factorization that no longer has the form $w_{n-1} w_{n-2} \cdots w_1$   as in Lemma \ref{fact: strings}.  This is the basic situation the following lemma addresses; we keep track of what happens when a monotone-increasing string is conjugated past another.

\begin{lem} \label{whole string lemma}
Let $i$ be a positive integer such that $1\leq i \leq n$ and suppose $1\leq p_i'\leq p_{i} \leq i-1$.  Then
\[
\left(s_{p'_{i}} s_{p'_{i}+1} \cdots s_{p_{i}} \right)  \left(s_{i-\ell_{i-1}} s_{i-\ell_{i-1}+1} \cdots s_{i-1} \right) = \left(s_{i-\ell_{i-1}'} s_{i-\ell_{i-1}'+1} \cdots s_{i-1} \right) \left(s_{p'_{i-1}} s_{p'_{i-1}+1} \cdots s_{p_{i-1}} \right) \]
where $\ell'_{i-1}, p'_{i-1}, p_{i-1}$ are given by the following table:
\[\begin{array}{|c|c|c|c|c|}
\cline{1-5} \hspace{1em} & &  & &  \\
\textup{ Case } & \textup{ Condition } & \ell_{i-1}' = & p_{i-1}' = & p_{i-1}= \\
\cline{1-5} & & \hspace{1em} & & \\
1& p_{i} < i-\ell_{i-1}-1 & \ell_{i-1} & p_{i}' & p_{i} \\
2& p_{i} = i-\ell_{i-1}-1 & \ell_{i-1}+(p_{i}-p_{i}'+1) & N/A & N/A \\
3& p'_{i} \leq i-\ell_{i-1} \leq p_{i} & \ell_{i-1}-1 & p'_{i} & p_{i}-1 \\
 4& i-\ell_{i-1} < p_{i}' & \ell_{i-1} & p'_{i}-1 & p_{i}-1 \\
 \hspace{1em} & &  & &  \\
\hline 
\end{array}\]
\end{lem}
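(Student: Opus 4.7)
The plan is to encode each monotone-increasing string $s_a s_{a+1}\cdots s_b$ (with $a\leq b$) as the cycle $\sigma_{a,b}$ sending $j\mapsto j+1$ for $a\leq j\leq b$, $b+1\mapsto a$, and fixing everything else, and then to verify each of the four claimed factorizations as a permutation identity. Writing $a=p'_i$, $b=p_i$, $c=i-\ell_{i-1}$, $d=i-1$, the four hypotheses $b\leq c-2$, $b=c-1$, $a\leq c\leq b$, and $c<a$ partition all possibilities with $a\leq b$ and $c\leq d$, so the identity
\[
\sigma_{a,b}\,\sigma_{c,d}\;=\;\sigma_{i-\ell'_{i-1},\,d}\,\sigma_{p'_{i-1},\,p_{i-1}}
\]
must be checked with the values from the table in each regime.

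First I would dispose of Cases 1 and 2, which are essentially free. In Case 1 the indices appearing in $\sigma_{a,b}$ and $\sigma_{c,d}$ differ by at least two, so the two strings commute outright and the product equals $\sigma_{c,d}\,\sigma_{a,b}$. In Case 2 the two strings concatenate into the single string $\sigma_{a,d}$, whose length $d-a+1$ equals $\ell_{i-1}+(p_i-p'_i+1)$ once one substitutes $c=b+1=i-\ell_{i-1}$; this matches the row of the table marked ``NA''.

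Then I would turn to Cases 3 and 4, where the identities to prove reduce respectively to $\sigma_{a,b}\,\sigma_{c,d}=\sigma_{c+1,d}\,\sigma_{a,b-1}$ and $\sigma_{a,b}\,\sigma_{c,d}=\sigma_{c,d}\,\sigma_{a-1,b-1}$. For each I would track the image of an arbitrary $j$ under both sides, splitting the integer line into the sub-ranges $j<a$, $a\leq j<c$, $j=c$, $c<j<b$, $j=b$, $b<j<d$, $j=d$, $j=d+1$, $j>d+1$ (swapping the roles of $a$ and $c$ in Case 4) and checking equality in each. The Case 4 identity additionally admits the structural reading that conjugation by $\sigma_{c,d}$ decrements each element of $\{c+1,\ldots,d+1\}$ by one, which forces the shape of the right-hand side once one knows the support of $\sigma_{a,b}$ lies in that set.

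The hard part will be the bookkeeping in Cases 3 and 4, especially at the boundary possibilities $a=c$, $c=b$, and $b=d$ where the cycles degenerate or overlap. None of these involves a new idea; each is a direct verification with two explicit cycles. An alternative, inductive proof using only the braid and commutation relations is possible but obscures the cycle picture, whereas the direct permutation computation makes the table entries transparent and the edge cases easy to audit.
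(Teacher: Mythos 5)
Your proposal is correct, and the identities you would verify are exactly the right ones (I checked the Case 3 identity $\sigma_{a,b}\,\sigma_{c,d}=\sigma_{c+1,d}\,\sigma_{a,b-1}$ pointwise on each of your sub-ranges and it holds, including the degenerate instances $a=c$, $c=b$, and $a=b$, where $\sigma_{a,b-1}$ is empty). The route is genuinely different from the paper's, though. The paper never leaves the Coxeter presentation: Case 1 is commutation of distant generators, Case 2 is concatenation, Case 4 is the repeated application of the relation $s_j\left(s_{i-\ell_{i-1}}\cdots s_{i-1}\right)=\left(s_{i-\ell_{i-1}}\cdots s_{i-1}\right)s_{j-1}$ for $i-\ell_{i-1}<j\leq i-1$, and Case 3 combines that with $s_{i-\ell_{i-1}}\left(s_{i-\ell_{i-1}}\cdots s_{i-1}\right)=s_{i-\ell_{i-1}+1}\cdots s_{i-1}$; no cycle structure is ever invoked. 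Your ``structural reading'' of Case 4 --- conjugation by $\sigma_{c,d}$ relabels the support of $\sigma_{a,b}$ downward by one --- is the cycle-level repackaging of that same braid relation, so Cases 1, 2, and 4 are essentially parallel in the two arguments; the real divergence is Case 3, where you substitute a pointwise evaluation for the paper's algebraic splitting $\sigma_{a,b}=\sigma_{a,c-1}\,s_c\,\sigma_{c+1,b}$ followed by cancellation of $s_c$ against the head of $\sigma_{c,d}$. What your version buys is auditability: every table entry is confirmed by an explicit computation with two cycles, and the exhaustiveness of the four conditions is transparent. What it costs is that the mechanism producing the new strings (why exactly one reflection cancels in Case 3, why the indices all drop by one in Case 4) is hidden inside the bookkeeping rather than exhibited by a relation. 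One point to fix in the write-up: the cycle form of $s_a s_{a+1}\cdots s_b$ (whether it sends $j\mapsto j+1$ or $j\mapsto j-1$ on its support) depends on your composition convention, so state the convention once and apply it uniformly to both sides of each identity; since both sides are products of the same kind of string, the verification is convention-independent, but a mismatch mid-proof would silently invert one factor.
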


\begin{proof}
If $p_{i} < i-\ell_{i-1}-1$ then each simple reflection in $s_{p'_{i}} s_{p'_{i}+1} \cdots s_{p_{i}}$ commutes with each simple reflection in $s_{i-\ell_{i-1}} s_{i-\ell_{i-1}+1} \cdots s_{i-1}$ which proves the first line of the table.  If $p_{i} = i-\ell_{i-1}-1 $ then the strings glue together to form $s_{p'_{i}} \cdots s_{i-1}$ proving the second line of the table.  If $s_j$ is a simple reflection with $i-\ell_{i-1}<j \leq i-1$ then
\[
s_j  \left(s_{i-\ell_{i-1}} s_{i-\ell_{i-1}+1} \cdots s_{i-1} \right) = \left(s_{i-\ell_{i-1}} s_{i-\ell_{i-1}+1} \cdots s_{i-1} \right) s_{j-1}
\]
using the braid relations.  Repeating this proves the fourth line of the table.  Combining this with the fact that
\[
s_{i-\ell_{i-1}} \left(s_{i-\ell_{i-1}}s_{i-\ell_{i-1}+1} \cdots s_{i-1} \right) = \left( s_{i-\ell_{i-1}+1} \cdots s_{i-1} \right) 
\]
proves the third line of the table.
\end{proof}

We will prove the main theorem by deleting a simple reflection and then rewriting the resulting permutation in monotone-increasing form, one step at a time.  Indeed suppose $T$ is a row-strict tableau of shape $\lambda$ with Schubert point $w_T=w_{n-1}w_{n-2} \cdots w_1$.  When we delete a simple reflection $s_{p_n+1}$ from the initial monotone-increasing string in $w_T$ we obtain
\begin{equation}\label{v-definition}
	 s_{n-\ell_{n-1} } \cdots s_{p_n} \hat{s}_{p_n+1} s_{p_n +2}\cdots s_{n-1} w_{n-2} \cdots w_2 w_1
	= w'_{n-1}  \bigstar_{n-1} \; w_{n-2}\cdots w_2 w_1 
\end{equation}
where $w'_{n-1}=s_{p_n +2}\cdots s_{n-2}s_{n-1} $ and $\bigstar_{n-1}= s_{n-\ell_{n-1} } s_{n-\ell_{n-1} +1} \cdots s_{p_n}$.  On the one hand, if there is a row-strict tableau $T'$ of shape $\lambda$ corresponding to this permutation, it must have $n$ in the box at the end of row 
\[
\ell(w'_{n-1})+1=\ell(s_{p_n+2}\cdots s_{n-2}s_{n-1})+1 = (n-1)-(p_n+2)+2 = n-p_n-1.
\]
Lemma \ref{whole string lemma} then allows us to write $\bigstar_{n-1}w_{n-2}=w'_{n-2}\bigstar_{n-2}$ for (possibly empty) monotone-increasing strings $w'_{n-2}$ and $\bigstar_{n-2}$. The length of $w'_{n-2}$ determines the box in $T'$ where $n-1$ must go, if possible. Continuing this process, the $i^{th}$ step produces the permutation 
\[
	w'_{n-1} w'_{n-2} \cdots w'_{i} \; \bigstar_i \; w_{i-1} \cdots w_2 w_1
\]
where 
\begin{eqnarray*}\label{bigstar definition}
\bigstar_i= s_{p_i '} s_{p_i'+1} \cdots s_{p_i -1}s_{p_i}
\end{eqnarray*} 
for some $p_i, p'_i$ determined by this process.  In the proofs in the next two sections we show that this process terminates and that it results in a row-strict tableau $T'$ of the same shape as $T$, namely that $T'$ is large enough to accommodate each $i$ according to the specifications of $w'_{i-1}$.

We now prove our main result given the following lemma, which will be proven in the next two sections.  

\begin{lem} \label{lemma: inductive step of proof}
Fix a Schubert point $w$ associated to a partition $\lambda$ with at most three rows or two columns.  Suppose that $w'$ is a permutation obtained from $w$ by erasing one simple reflection $s_j$.  Then $w'$ is also a Schubert point associated to the partition $\lambda$.
\end{lem}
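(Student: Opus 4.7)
My plan is to push the effect of erasing $s_j$ from the canonical factorization $w = w_{n-1} w_{n-2} \cdots w_1$ rightward one string at a time, using Lemma \ref{whole string lemma} as the basic rewriting rule. Suppose $s_j$ lies inside $w_m = s_{k_m}\cdots s_{m-1}$. Writing $w_m = (s_{j+1}\cdots s_{m-1})(s_{k_m}\cdots s_{j-1})$ and removing $s_j$ yields
\[
	w_{n-1}\cdots w_{m+1}\, w'_m\, \bigstar_m\, w_{m-1}\cdots w_1,
\]
where $w'_m := s_{j+1}\cdots s_{m-1}$ is a valid monotone string ending at $s_{m-1}$ and $\bigstar_m := s_{k_m}\cdots s_{j-1}$ is an orphan block of length $j-k_m$. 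I would then iteratively apply Lemma \ref{whole string lemma} to rewrite $\bigstar_i w_{i-1} = w'_{i-1} \bigstar_{i-1}$. Inspecting the four cases shows the process terminates: Case~2 absorbs the orphan, Case~3 strictly decreases its length (via an $s_j^2 = e$ cancellation), and Cases~1 and~4 preserve its length while shifting its position leftward. Since the index $i$ decreases at each step, the orphan either vanishes or is absorbed within at most $m$ iterations, leaving a canonical factorization $w' = w'_{n-1}\cdots w'_1$.

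By Lemma \ref{Schubert point unique} and Mbirika's bijection, $w'$ is a Schubert point for $\lambda$ precisely when the monomial $\prod_{i=2}^n x_i^{\ell'_{i-1}}$, where $\ell'_i := \ell(w'_i)$, lies in the Garsia--Procesi basis for $H^*(\B^X)$. If the new length vector $(\ell'_i)$ were coordinatewise bounded by $(\ell_i)$, this would follow immediately from Lemma \ref{dissolving case}. However Case~2 of Lemma \ref{whole string lemma} can \emph{increase} the length of a string, since there $\ell'_{i-1} = \ell_{i-1} + \ell(\bigstar_i)$, so in general $(\ell'_i) \not\leq (\ell_i)$. Consequently I would track the orphan's trajectory carefully and construct an explicit row-strict tableau $T'$ of shape $\lambda$ whose Springer dimension pair counts (computed via Lemma \ref{countingrows}) equal the new lengths $\ell'_i$.

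The main obstacle, and the reason for the hypothesis on $\lambda$, is that this explicit construction must respect the shape constraints $\lambda_1 \geq \lambda_2 \geq \cdots$; for a general partition the orphan's cascade can force entries out of any legal row-strict filling of $\lambda$. My strategy is to treat the two-column and three-row cases separately. For partitions with at most two columns, every string $w_i$ has length $0$ or $1$, so the orphan is a single simple reflection throughout and Lemma \ref{whole string lemma} collapses to a small number of swaps between the two columns of $T$. For partitions with at most three rows, every $\ell(w_i) \in \{0,1,2\}$, so the possible orphan transitions are finite and can be enumerated; constructing $T'$ reduces to relocating an entry among three rows. I would defer these explicit constructions to Sections~\ref{section: three-row} and~\ref{section: two-column}, where the case-by-case work consists of verifying that $T'$ has shape exactly $\lambda$ in every subcase.
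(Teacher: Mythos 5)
Your outline coincides with the paper's own setup in Section \ref{section: outline}: delete $s_j$, commute the two halves of the broken string to obtain $w'_m\,\bigstar_m$, push the orphan $\bigstar_i$ rightward with Lemma \ref{whole string lemma}, observe that only Case~2 (gluing) blocks a direct appeal to Lemma \ref{dissolving case}, and then construct $T'$ case by case. The problem is that everything after that point --- which is where the lemma actually lives, and why the paper devotes Sections \ref{section: three-row} and \ref{section: two-column} to it --- is deferred rather than proved, and the one concrete claim you make about how the deferred work would go is false. For a partition with at most two columns the strings $w_i$ do \emph{not} have length $0$ or $1$: by Lemma \ref{countingrows}, if $q$ sits in a row of length one of $T[q]$ then $\ell_{q-1}$ counts all longer rows plus all equal rows above it, which can be as large as $q-1$ (for $\lambda=(1^n)$ the top string of the longest Schubert point is $s_1s_2\cdots s_{n-1}$, and in the paper's own two-column example $w_{10}=s_6s_7s_8s_9s_{10}$). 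You appear to have swapped ``two columns'' with ``two rows.'' Consequently the orphan block in the two-column case is not a single simple reflection, and the proposed ``small number of swaps between the two columns'' does not describe the actual situation.

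What the two-column case actually requires is tracking a multi-row shaded block through the diagrams $\lambda[i]$: one must show that $c'_i$ (the length of the second column of $\lambda'[i]$) always equals $c_i$ or $c_i-1$ before gluing (Lemma \ref{lemma: cases of c_i}), that the top row of length one can be shaded at step $i$ only if it was already shaded at step $n-1$ (Lemma \ref{lemma: top row comes from top row}), and that these facts exclude the one gluing configuration (top row of length one shaded while $c_i=c'_i$) in which $\lambda'[i-1]\leq\lambda[i-1]$ would fail; only then does Lemma \ref{dominance ordering} finish the argument. Likewise, in the three-row case ``the transitions are finite and can be enumerated'' is not a proof: the paper's argument is an induction on $n$ whose hardest subcase ($w_{n-1}=s_{n-2}s_{n-1}$ with $s_{n-1}$ deleted and $\lambda_1=\lambda_2=\lambda_3$) requires following a chain of glued strings down to the largest $i$ with $w_i=s_{i-1}s_i$ and $w_{i-1}\neq s_{i-2}s_{i-1}$. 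As written, your proposal correctly restates the reduction with which the proof begins, but it does not contain the proof.
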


The main theorem shows that this lemma implies our claim.

\begin{thm} \label{theorem: main inductive proof}
Suppose that $w$ is a Schubert point associated to a partition $\lambda$ with at most three rows or two columns and $v \leq w$.  Then $v$ is also a Schubert point associated to the partition $\lambda$.
\end{thm}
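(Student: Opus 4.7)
The plan is to reduce the theorem to Lemma \ref{lemma: inductive step of proof} via a short induction on the length difference $\ell(w) - \ell(v)$. The base case $\ell(w) = \ell(v)$ forces $v = w$ since $v \leq w$, and $v$ is a Schubert point by hypothesis. For the inductive step, assuming $\ell(v) < \ell(w)$, the goal is to interpose an intermediate Schubert point $w_1$ satisfying $v \leq w_1 < w$, so that the inductive hypothesis applied to the pair $(v, w_1)$ completes the argument.

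Such a $w_1$ is produced as follows. Since $v < w$, the chain property of Bruhat order yields an element $w_1$ with $v \leq w_1 < w$ and $\ell(w_1) = \ell(w) - 1$. Write $w$ in its canonical monotone-increasing factorization from Lemma \ref{fact: strings}; this is a reduced expression for $w$ consisting entirely of simple reflections. The strong exchange condition then tells us that $w_1$ is obtained from this reduced expression by deleting exactly one letter, which is necessarily a simple reflection. Hence $w_1$ arises from the Schubert point $w$ by erasing a single simple reflection, and Lemma \ref{lemma: inductive step of proof} guarantees that $w_1$ is itself a Schubert point associated to $\lambda$.

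Applying the inductive hypothesis to the pair $(v, w_1)$, which is valid because $\ell(w_1) - \ell(v) < \ell(w) - \ell(v)$, yields that $v$ is a Schubert point associated to $\lambda$, completing the induction.

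The only non-trivial input to this argument is Lemma \ref{lemma: inductive step of proof}, which is the real heart of the theorem and whose proof relies essentially on the hypothesis that $\lambda$ has at most three rows or two columns. Tracking how the canonical factorization of a Schubert point changes after a single simple-reflection deletion --- and verifying that the resulting monotone-increasing data still corresponds to a row-strict tableau of shape $\lambda$ --- is the main obstacle; that work is deferred to Sections \ref{section: three-row} and \ref{section: two-column}, where the rewriting rules of Lemma \ref{whole string lemma} are deployed case by case.
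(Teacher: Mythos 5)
Your proof is correct and follows essentially the same strategy as the paper's: reduce to Lemma \ref{lemma: inductive step of proof} by descending from $w$ to $v$ through a chain of single simple-reflection deletions. Your version is slightly more careful in justifying that chain --- invoking the chain property of Bruhat order and the strong exchange condition applied to the canonical reduced factorization of each intermediate element --- but the underlying argument is the same.
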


\begin{proof}
Since $v\leq w$ we can find a string of simple reflections $s_{j_1}, s_{j_2}, \ldots, s_{j_k}$ so that 
\begin{itemize}
\item for each $1\leq i \leq k$ the permutation $v_i$ is obtained from $v_{i-1}$ by erasing one simple reflection $s_{j_i}$ and
\item the initial and terminal permutations are $v_0=w$ and $v_k=v$ respectively.
\end{itemize}
Lemma \ref{lemma: inductive step of proof} says that if $v_{i-1}$ is a Schubert point associated to the partition $\lambda$ then so is $v_i$.  Inducting on $i$ we conclude that $v_k=v$ is a Schubert point associated to $\lambda$ as well.
\end{proof}

\begin{cor} \label{corollary: poincare polynomials} Let $X\in \mathfrak{gl}_n(\C)$ be a nilpotent matrix whose Jordan type is given by the partition $\lambda$ with at most three rows or two columns.  Then the Poincar\'{e} polynomial of the Springer fiber $\B^X$ equals the Poincar\'e polynomial
of the union of Schubert varieties for Schubert points corresponding to standard tableaux of shape $\lambda$:
\[
P(\B^X,t)=P(\cup_{wB\in \B^X} \overline{C}_{w_T},t)= P(\cup_{T\in St(\lambda)} \overline{C}_{w_T},t).
\]
\end{cor}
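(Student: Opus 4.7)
The strategy is to compute both Poincar\'e polynomials as generating functions over permutations from cellular decompositions and then match them using the results already established.

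First I would expand the left-hand side. By Lemma \ref{paving lemma}, the Springer fiber $\B^X$ admits an affine paving by the nonempty intersections $C_w \cap \B^X$, which are indexed by the row-strict tableaux $T$ of shape $\lambda$. Definition \ref{defn: Schubert point} is designed precisely so that $\dim_\C(C_w \cap \B^X) = \ell(w_T)$, so
\[
P(\B^X, t) \;=\; \sum_{T \text{ row-strict of shape } \lambda} t^{\,\ell(w_T)}.
\]
For the Schubert union, the Bruhat decomposition restricts to a CW-decomposition $\overline{C}_{w_T} = \bigsqcup_{v \leq w_T} C_v$, and since distinct Schubert cells of $\B$ are disjoint,
\[
\bigcup_{wB \in \B^X} \overline{C}_{w_T} \;=\; \bigsqcup_{v \in S} C_v, \qquad S := \{\, v \in S_n : v \leq w_T \text{ for some row-strict } T \text{ of shape } \lambda \,\},
\]
whose Poincar\'e polynomial is $\sum_{v \in S} t^{\ell(v)}$.

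The heart of the argument is to show these two generating functions agree. For this I would invoke Theorem \ref{theorem: main inductive proof}, which shows that $S$ equals the full set of Schubert points associated to $\lambda$: the inclusion $S \supseteq \{\text{Schubert points of } \lambda\}$ is immediate (take $v = w_T$), and the reverse inclusion is exactly the content of the theorem. Then Lemma \ref{Schubert point unique} supplies a length-preserving bijection between row-strict tableaux of shape $\lambda$ and the Schubert points of $\lambda$: each $v \in S$ is the unique Schubert point $w_T$ of a unique row-strict tableau $T$, with $\ell(v) = \ell(w_T)$. This matches the two sums term by term.

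Finally, for the simplification to standard tableaux I would apply Theorem \ref{thm: standard tableaux}: every row-strict $w_T$ is dominated in Bruhat order by some $w_{T'}$ with $T' \in St(\lambda)$, so $\overline{C}_{w_T} \subseteq \overline{C}_{w_{T'}}$ and the two unions coincide. No genuine obstacle remains at the corollary level: Theorem \ref{theorem: main inductive proof} does the real work, and the rest is bookkeeping that assembles the affine paving of $\B^X$, the cellular decomposition of Schubert varieties, and the bijection of Lemma \ref{Schubert point unique}.
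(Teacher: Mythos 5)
Your proposal is correct and follows the same route as the paper: the paper's (much terser) proof likewise invokes Theorem \ref{theorem: main inductive proof} to see that the Schubert points form a lower order ideal in Bruhat order, so that the union of Schubert varieties is a disjoint union of cells indexed exactly by the Schubert points, and then uses Theorem \ref{thm: standard tableaux} for the reduction to standard tableaux. Your version simply makes explicit the bookkeeping (the affine paving of $\B^X$, the dimension-preserving bijection from Lemma \ref{Schubert point unique}) that the paper leaves implicit.
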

\begin{proof}  Theorem \ref{theorem: main inductive proof} shows that the set of Schubert points corresponding to $\lambda$ is a lower order ideal with respect to Bruhat order, so the set of these permutations corresponds to a union of Schubert varieties in the flag variety $G/B$.  The second equality follows from Theorem \ref{thm: standard tableaux}.
\end{proof}

\begin{example}  When $\lambda=(2,2,1)$ we have
\[
	P(\B^X,t) = 5t^4+11t^3+9t^2+4t+1
\]
by Corollary \ref{corollary: poincare polynomials} together with Example \ref{schubert cell union}.  The reader can independently verify this fact using the inductive formula for the Poincar\'e polynomial given in \cite{S} or \cite{Fr}.
\end{example}

The following example shows that Lemma \ref{lemma: inductive step of proof} does not hold if $\lambda$ is a partition containing the shape $\mu = (3,1,1,1)$ as a subdiagram.  

\begin{example} 
Let $T$ be following standard tableau of shape $\mu$. 
\begin{center}
$\begin{array}{|c|c|c|}
\hline 1 & 3 & 5\\
\hline 2 \\
\cline{1-1} 4\\
\cline{1-1} 6\\
\cline{1-1}
\end{array}$
\end{center}
$T$ has associated Schubert point $w_T=s_3s_4s_5 s_2s_3s_1$.  Let $w'=s_3\hat{s}_4s_5 s_2s_3s_1 = s_5 s_2s_3s_2s_1$ so by construction $w' \leq w_T$.  However there exists no row-strict filling of $\mu$ corresponding to $w'$!  While Lemma \ref{lemma: inductive step of proof} fails, it is still possible that the Springer fibers have the same Poincar\'{e} polynomials as a union of other Schubert varieties. We have attempted computer calculations to confirm or refute this in the case of $\lambda=(3,1,1,1)$ but so far have not found an algorithm that terminates in reasonable time.  This is the next step in testing whether Theorem~\ref{THM1} generalizes to arbitrary Springer fibers.  All components of $\B^{\lambda}$ are smooth in this case, so one might also consider each of the irreducible components of $\B^\lambda$ separately (see the discussion in Section~\ref{section: open questions}).
\end{example}

Finally the following example demonstrates that these results do not always hold in arbitrary Lie type, not even for partitions with at most two rows.

\begin{example}  Let $\mathfrak{sp}_{6}(\C)$ denote the symplectic Lie algebra of Lie type $C_3$.  The corresponding root system has three simple roots so its Weyl group is generated by three simple reflections.  This means there are precisely three Schubert cells of dimension $1$ in $\B$.  However a well-known result states that if $X\in \fg$ is a subregular nilpotent element, then $\B^X$ is a Dynkin curve \cite[Theorem~6.11]{H}.  For Lie type $C_3$ the Dynkin curve consists of $4$ projective lines (because the associated Dynkin diagram is a path with four edges).  In particular the Poincar\'e polynomial of $\B^X$ is $1+4t$.  There is no Schubert variety or union of Schubert varieties in this flag variety with the same Poincar\'e polynomial.  Therefore our results in this paper do not extend exactly as stated to all Springer fibers in arbitrary Lie type.  
\end{example}


\section{The three row case}\label{section: three-row}

The following theorem proves Lemma \ref{lemma: inductive step of proof} for the three row case.  Recall that if $T$ denotes a row-strict filling of $\lambda$ then $T[i]$ denotes the diagram obtained from $T$ by deleting the boxes labeled by $i+1,...,n$.  We let $\lambda[i]$ denote the partition of $i$ obtained from the composition corresponding to $T[i]$ by reordering the rows in decreasing order.  In this section, we consider the case in which $T$ has at most three rows.  A key feature of this case is that $\ell_{i-1}\leq 2$ by Lemma~\ref{countingrows} since $T[i]$ has at most three rows for all $2\leq i \leq n$, and therefore $\ell(w_{i-1})\leq 2$.  We use this fact in the proof below.

\begin{thm} \label{thm: three row}  Let $\lambda$ be a partition of $n$ with at most three rows and $T$ be a row-strict tableau of shape $\lambda$.  Suppose $w'$ is obtained from $w_T$ by deleting a simple reflection.  Then there exists a row-strict tableau $T'$ of shape $\lambda$ such that $w'=w_{T'}$.
\end{thm}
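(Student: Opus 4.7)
The plan is to execute the iterative rewriting outlined in Section~\ref{section: outline}. Given the canonical factorization $w_T = w_{n-1}\cdots w_1$ and the deletion of a simple reflection $s_j$ from some string $w_k$, I would first split the resulting factor as $w'_k\cdot\bigstar_k$ where $w'_k=s_{j+1}\cdots s_k$ and $\bigstar_k=s_{k-\ell_k+1}\cdots s_{j-1}$ (using commutation of disjoint simple reflections), and then apply Lemma~\ref{whole string lemma} repeatedly to push $\bigstar_i$ rightward through $w_{i-1}, w_{i-2},\ldots$ until Case~2 of the lemma absorbs $\bigstar$ into some string and terminates the cascade. This produces a sequence of candidate strings $w'_{n-1},\ldots,w'_1$.

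The three-row hypothesis is essential for translating the output of the cascade into a tableau. By Lemma~\ref{countingrows}, every $\ell_{q-1}\in\{0,1,2\}$, and moreover $\ell_{q-1}=r_q-1$ where $r_q$ is the row of entry~$q$ in~$T$. The updated sequence $(\ell'_{q-1})_{q\geq 2}$ therefore prescribes a candidate row $r'_q=\ell'_{q-1}+1$ for each entry $q\geq 2$ in the sought tableau $T'$. I would build $T'$ by inserting the entries $1,2,\ldots,n$ in increasing order, each into the leftmost empty box of its prescribed row, with entry~$1$ placed in the unique row whose remaining capacity rounds out the shape to~$\lambda$. Row-strictness of $T'$ is then automatic because entries are inserted in increasing order.

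What remains is to verify that $T'$ actually has shape $\lambda$, i.e.\ that the multiset $\{r'_q\}_{q\geq 2}$ together with an appropriate choice of row for entry~$1$ realizes the row-lengths of $\lambda$. Cases~1 and~4 of Lemma~\ref{whole string lemma} leave $\ell$-values unchanged, Case~3 decrements some $\ell_{i-1}$ by one (moving an entry up a row), and Case~2 increments some $\ell_{m-1}$ by $|\bigstar_m|$ (moving an entry down). Since each $\ell_i\leq 2$, the correction term $|\bigstar_i|$ is bounded, so the net effect of the cascade should be either a swap of two entries between two rows (preserving the row multiset) or a single shift that is compensated by re-placing entry~$1$ in a different row. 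Once this is verified, Lemma~\ref{Schubert point unique} identifies $T'$ as the unique row-strict tableau with Schubert point~$w'$.

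The main obstacle will be executing the case analysis cleanly. I would organize it by the initial string $w_k$ (equivalently, by the row containing entry~$k+1$) and by which $s_j$ inside $w_k$ is deleted, tracking how $\bigstar$ interacts with each subsequent string. Care will be needed in subcases where $\bigstar$ passes through trivial strings $w_i=e$ and in those where entry~$1$'s implicit row matters (entry~$1$ contributes to dimension pairs $(1,q)$ and hence affects the $\ell$-values). As a sanity check, one can independently compute the dimension pairs of the proposed $T'$ using Lemma~\ref{countingrows} and verify that they match the $(\ell'_{q-1})$ produced by the cascade, thereby confirming $w_{T'}=w'$.
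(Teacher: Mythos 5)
Your overall strategy---push the displaced string $\bigstar$ rightward via Lemma~\ref{whole string lemma} until it glues or dissolves, then reconstitute a tableau from the new exponent sequence---is the same one the paper follows, but two things go wrong in the execution. First, the identity $\ell_{q-1}=r_q-1$ is false for general row-strict tableaux: by Lemma~\ref{countingrows}, $\ell_{q-1}$ counts the rows of $T[q]$ above the row of $q$ \emph{of equal length} plus \emph{all} rows of strictly greater length, and this equals $r_q-1$ only when $T[q]$ is a partition shape, i.e.\ essentially only for standard tableaux (Remark~\ref{remark: standard tableau}). For instance in $T=\young(13,45,2)$ the entry $2$ sits in row $3$ but $\ell_1=1$, not $2$. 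So your rule $r'_q=\ell'_{q-1}+1$ places entries in the wrong rows, and the sanity check you propose at the end would fail; the correct dictionary between strings and boxes is the more delicate one set up at the start of Section~\ref{section: two-column}.

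Second, and more seriously, the entire content of the theorem is the step you defer with ``the net effect of the cascade should be either a swap of two entries between two rows \ldots\ or a single shift that is compensated by re-placing entry~$1$.'' This is neither verified nor obvious. In the three-row case the only nontrivial deletion is that of $s_{n-1}$ from $w_{n-1}=s_{n-2}s_{n-1}$, and when $\lambda_1=\lambda_2=\lambda_3$ the resulting $\bigstar$-string is pushed through every consecutive string of length two (Case 4 of Lemma~\ref{whole string lemma}) and may glue or dissolve arbitrarily far down the word; showing that the terminal shape is still $\lambda$ requires comparing $\lambda'[i]$ with $\lambda[i]$ in dominance order and invoking Lemma~\ref{dominance ordering} together with an induction on $n$, plus a separate argument for the rectangular case. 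Your ``bounded correction term'' heuristic does not rule out the shape drifting away from $\lambda$---which is exactly what happens for the shape $(3,1,1,1)$ in the example in Section~\ref{section: outline}---so any correct argument must use the three-row hypothesis precisely at this step, and yours does not.
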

\begin{proof}  Our proof is by induction on $n$.  We start with the base cases $n \leq 2$.  The cases when $\lambda$ is a single row are trivial, since the Springer fibers in those cases consist of the single flag $eB$ and the only Schubert point is $e$.  The cases when $\lambda$ is a single column are also trivial, since every filling of the diagram is row-strict and hence every permutation flag is in the Springer fiber.  Indeed, the Springer fiber in that case is the full flag variety.  Therefore the Schubert points are also the set of all permutations, namely $\{e, s_1\}$ in the case $n=2$. 

Fix a diagram $\lambda = (\lambda_1, \lambda_2, \lambda_3)$ with $n$ boxes and a row-strict tableau $T$ of shape $\lambda$ with corresponding Schubert point $w_T$.  Consider the following cases:
\begin{itemize}
\item Suppose we do not delete a simple reflection from $w_{n-1}$.  We have $w_T = w_{n-1} w_{T[n-1]}$.  Deleting a simple reflection from $w_T$ results in a  Schubert point for $\lambda$  if and only if deleting a simple reflection from $w_{T[n-1]}$ results in a Schubert point for $\lambda[n-1]$.  The latter holds by induction.
\item Suppose we delete the first simple reflection in the string $w_{n-1}$ reading from the left.  Then the permutation obtained after deleting is $w'_{n-1}w_{n-2} \cdots w_1$ where $w'_{n-1} \leq w_{n-1}$ has either length zero or one.  In all cases $w'_{n-1}w_{n-2} \cdots w_1$ is the Schubert point for a row-strict tableau of shape $\lambda$ by Lemma \ref{dissolving case}.
\end{itemize}

The only case left is when $w_{n-1}=s_{n-2}s_{n-1}$ and we delete $s_{n-1}$.  We now prove that this produces a Schubert point corresponding to a row-strict tableau of shape $\lambda$.  
\begin{itemize}
\item Suppose that $w_{n-2}=e$ so that $w'_{n-2}=s_{n-2}$.  This  means we put $n-1$ into the second row of $\lambda'[n-1]$ and into the first row of $\lambda[n-1]$.  The diagram for $\lambda[n-2]$ thus has (unordered) row-lengths $\{\lambda_1-1, \lambda_2, \lambda_3-1\}$ while the diagram for $\lambda'[n-2]$ has (unordered) row-lengths 
\begin{itemize}
\item[$\bullet$] $\{\lambda_1-1, \lambda_2-1, \lambda_3\}$ if $\lambda_1=\lambda_2=\lambda_3$ or if $\lambda_1 > \lambda_2$, and 
\item[$\bullet$] $\{\lambda_1-2, \lambda_2, \lambda_3\}$ if $\lambda_1 = \lambda_2>\lambda_3$.
\end{itemize}

\begin{figure}
\[\begin{array}{ccc}
\begin{ytableau} \cdots & \empty &  \empty & {\star \bullet} \\
\cdots & \empty & \empty & \empty \\
 \cdots & \empty &  \empty & {\star \bullet}
\end{ytableau} \hspace{0.25in}
 & 
\begin{ytableau} \cdots & \empty & \empty & {\star \bullet} \\
\cdots & \empty & \star \\
 \cdots & \empty &  \bullet \\
\end{ytableau} \hspace{0.25in}
&
\begin{ytableau} \cdots & \empty &  \star & {\star \bullet} \\
\cdots & \empty & \empty & \empty \\
 \cdots & \empty &  \bullet \\
\end{ytableau}
\\
\lambda_1= \lambda_2 = \lambda_3 \hspace{0.25in}& \lambda_1>\lambda_2 \geq \lambda_3 \hspace{0.25in}& \lambda_1=\lambda_2 > \lambda_3 \end{array}
\]
\begin{caption}{Schematics for $\lambda'[n-2]$ and $\lambda[n-2]$. Stars mark boxes erased from $\lambda'[n-2]$ and dots mark boxes erased from $\lambda[n-2]$.} \label{figure: one three-row case}
\end{caption}
\end{figure}
Figure \ref{figure: one three-row case} gives schematics of $\lambda$ in these situations; erasing the boxes with dots gives $\lambda[n-2]$ while erasing the boxes with stars gives $\lambda'[n-2]$.  In all cases the diagrams satisfy 
\[ \lambda'[n-2] \leq \lambda[n-2]\]
and by Lemma \ref{dominance ordering} the product $w_{n-3}\cdots w_1$ corresponds to a  row-strict filling of the shape $\lambda'[n-2]$. Inserting $n-1$ and $n$ at the end of the rows described above produces a row-strict filling $T'$ of shape $\lambda$ such that $w'=w_{T'}$.

\item Suppose that $w_{n-2}=s_{n-2}$ so that $w'_{n-2}=e$.  Then $w'=w_{n-3}w_{n-4} \cdots w_1$ and by Lemma \ref{dissolving case} we know $w'$ is the Schubert point corresponding to some row-strict tableau of shape $\lambda$.

\item Suppose that $w_{n-2}=s_{n-3}s_{n-2}$ so that $w'_{n-2}=w_{n-2}$.  Thus the claim holds if the permutation $w'=w_{n-2} s_{n-3} w_{n-3}w_{n-4} \cdots w_1$ corresponds to a row-strict filling of $\lambda$.  

Suppose that $\lambda_1 \neq \lambda_3$. The row-strict filling $T[n-1]$ of shape $\lambda[n-1]=(\lambda_1, \lambda_2, \lambda_3-1)$ corresponds to the Schubert point $w_{n-2}w_{n-3}\cdots w_1$.  By the inductive hypothesis, there exists a row-strict diagram $T''[n-1]$ of shape $\lambda[n-1]$ corresponding to the permutation 
\[w_{T''[n-1]}=s_{n-3}\hat{s}_{n-2}w_{n-3}w_{n-4}\cdots w_1=s_{n-3}w_{n-3}w_{n-4}\cdots w_1\] 
Moreover $n-1$ occurs at the end of the first row of $T''[n-1]$ since $w_{T''[n-1]}$ does not contain a monotone-increasing string ending in $s_{n-2}$.  Let $T'$ be the row-strict diagram of shape $\lambda$ obtained from $T''[n-1]$ by adding the box corresponding to $n$ back to the third row, filling it with $n-1$, and replacing the label at the end of the first row with $n$.  Then $w_{T'} = w_{n-2} s_{n-3}w_{n-3}w_{n-4}\cdots w_1$ as desired.

Finally if $\lambda_1=\lambda_2 = \lambda_3$ then let $i$ be the largest number so that $w_i=s_{i-1}s_i$ and $w_{i-1}\neq s_{i-2}s_{i-1}$.  Note that 
\[w' = s_{n-2} \left( w_{n-2} w_{n-3} \cdots  w_1 \right) = \left(w_{n-2} w_{n-3} \cdots w_i \right) s_{i-1} \left( w_{i-1} w_{i-2} \cdots w_1\right) \] 
so in particular $w'_j = w_j$ for each $j$ with $i \leq j \leq n-2$.  Also note that 
\[\lambda'[i]=\lambda[i]=(\lambda_1, \lambda_2, \lambda_3-(n-i))\]
since $\lambda_1=\lambda_3$.  If $w_{i-1}=s_{i-1}$ then $w'_{i-1}=e$ while if $w_{i-1}=e$ then $w'_{i-1}=s_{i-1}$.  In both cases
\[\lambda'[i-1]=\lambda[i-1]=(\lambda_1, \lambda_2-1, \lambda_3-(n-i))\]
since $\lambda_1=\lambda_2$.  The permutation $w_{i-2} \cdots w_1$ corresponds to a row-strict filling of $\lambda[i-1]$ by hypothesis.  Since $ w'_{i-2} \cdots w'_1= w_{i-2} \cdots w_1$ and $\lambda'[i-1]=\lambda[i-1]$  we conclude that $w'$ corresponds to a row-strict filling of $\lambda$.
\end{itemize}
\end{proof}


\section{The two column case}\label{section: two-column}

In this final section we prove Lemma \ref{lemma: inductive step of proof} in the case of partitions with two columns. Let $T$ be a row-strict tableau corresponding to such a partition.  Throughout this section we consider each row of $\lambda[i]$ to be labeled with a simple reflection in decreasing order.  Our labeling of the rows of $\lambda[i]$ is inspired and informed by Mbirika's arguments in \cite[Section 2.2]{M}.  We label the second row of $\lambda[i]$ by $s_{i-1}$, the third row by $s_{i-2}$, the fourth row by $s_{i-3}$, and so on, with the top row labeled $e$.  The string $\bigstar_i$ is associated to a subset of boxes, namely those at the end of the rows in $\lambda[i]$ corresponding to the simple reflections in $\bigstar_i$.  We refer to the boxes in these rows as shaded boxes, omitting specific references to $\bigstar_i$.  If a row contains a shaded box, we refer to it as a shaded row.

With this labeling, our algebraic results can be transformed into a claim about boxes in the diagram $\lambda$.  We say that the rightmost box of $\lambda[i]$ in the row labeled by the simple reflection with the lowest index in $w_{i-1}$ is the box corresponding to the string $w_{i-1}$.  Note that there is a bijection between the rows of $\lambda[i]$ and $T[i]$ given as follows.  The row in $\lambda[i]$ labeled by $s_{i-k}$ for some $1\leq k \leq i-1$ corresponds to the row in $T[i]$ having the property that there are $k$ rows above it of greater or equal length, namely the row containing $i$.  The row labeled by $e$ in $\lambda[i]$ corresponds to the top row of length 2 in $T[i]$. This also defines a bijection between between the boxes of $\lambda[i]$ and $T[i]$.  We will use this bijection, and the analogous bijection between the boxes of $\lambda'[i]$ and $T'[i]$, throughout this section.

The simple reflections labeling the rows shift every time $\lambda[i]$ loses a box to become $\lambda[i-1]$.  In particular, the row labeled $s_j$ in $\lambda[i]$ is labeled $s_{j-1}$ in $\lambda[i-1]$. The next table uses this observation to interpret the conditions of Lemma \ref{whole string lemma} graphically.  We say a box touches the shaded boxes if its row is immediately above or below a shaded row.  The table below makes use of the identification between the boxes of $T[i]$ and $\lambda[i]$ established in the paragraph above.

\begin{center}
\begin{tabular}{|l|l|l|}
\cline{1-3}
Case 1 & box containing $i$ is above and  & delete that box from $\lambda[i]$ and \\
& does not touch shaded boxes & slide shaded boxes up one row \\
\cline{1-3} Case 2 & box containing $i$ is above and  & gluing step:   \\
& touches shaded boxes &  un-shade all boxes  \\
\cline{1-3} Case 3 & box containing $i$ is a shaded box & delete that box from $\lambda[i]$ and \\ 
& & un-shade the {\em lowest} shaded box \\
\cline{1-3} Case 4 & box containing $i$ is below & delete box from $\lambda[i]$ \\
&  shaded boxes & \\
\hline
\end{tabular}
\end{center}
We note in particular that in Cases 3 and 4 the shaded boxes do not slide up when passing from $\lambda[i]$ to $\lambda[i-1]$.  The process described in the chart above terminates after all the shaded boxes have been un-shaded.  This occurs after an application of Case 2 (in which the $\bigstar_i$-string glues to $w_{i-1}$) or after several applications of Case 3 (in which case the $\bigstar_i$-string dissolves).

Since a two-column tableau with $n$ boxes is determined by the length of its second column, we write $c_i$ for the length of the second column of $\lambda[i]$.  Note that either $c_{i-1}=c_i$ or $c_{i-1}=c_i-1$ for each $i = 2, 3, \ldots, n$.  With this notation $\ell(w_{i-1})=c_i-1$ is equivalent to saying that $i$ fills the box at the end of the second column of $T[i]$.

We prove below that we can fill the boxes corresponding to $w'_{n-1}, w'_{n-2}, \ldots$ in $\lambda$ with $n, n-1,\ldots$, respectively, up until the gluing step. We create diagrams $\lambda'[n-1], \lambda'[n-2], \ldots$ at each step by erasing the labeled boxes.  The next lemma shows that the partition $\lambda'[i]$ cannot differ much from $\lambda[i]$.

\begin{lem}\label{lemma: cases of c_i}  Let $T$ be a row-strict tableau of shape $\lambda$ with two columns.  It is always possible to fill a row-strict diagram $T'$ according to the length of $w'_{i-1}$ for each $i$ before the gluing step. Moreover $c_i=c_i'$ or $c_i = c_i'+1$ for each $i$ up until the gluing step, according to the following table: 
\end{lem}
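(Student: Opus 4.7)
The plan is to prove both statements simultaneously by descending induction on $i$, following the rewriting process of Section \ref{section: outline}: we iteratively apply Lemma \ref{whole string lemma} to push the auxiliary $\bigstar$-string rightward through the canonical factorization of $w'$. At the start of step $i$ I assume the partial filling of $T'$ obtained from $w'_{n-1}, w'_{n-2}, \ldots, w'_i$ is row-strict and that $c_j \in \{c'_j, c'_j + 1\}$ for every $j \geq i$.

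For the base case I would examine the initial deletion of a reflection $s_j$ from some string $w_m$ of $w_T$. If $s_j$ is the lowest-index reflection of $w_m$, the shortened word is already in canonical monotone-increasing form and the conclusion follows from Lemma \ref{dissolving case}. Otherwise, deleting $s_j$ produces an auxiliary string $\bigstar_m$ while leaving $w'_j = w_j$ and $\lambda'[j] = \lambda[j]$ for all $j \geq m$, so both invariants hold trivially at the outset of the propagation.

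For the inductive step I would consult the graphical reformulation of Lemma \ref{whole string lemma} given in the table preceding the lemma statement, splitting into Cases 1, 3, and 4 (Case 2 is the gluing step, which terminates the process and is excluded by hypothesis). In each case the position of the box of $T$ containing $i$, relative to the shaded rows, determines $w'_{i-1}$ and hence the row of $\lambda'[i]$ into which the box labeled $i$ in $T'$ must be placed. I would then verify row-strictness by comparing the columns of the partial $T'$ against those of $T$, and compute the updated column length $c'_{i-1}$ directly from whether the deleted box lies in the first or second column of $\lambda'[i]$.

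The main obstacle is the placement step in Case 1, where all shaded boxes slide up by one row: if the box containing $i$ sits near the boundary between the length-$2$ rows and the length-$1$ rows of $\lambda$, I must check that the prescribed row of $\lambda'[i]$ is actually available with the correct length. Here the two-column hypothesis is essential, because it forces the shaded region to be a contiguous block of second-column boxes, so only finitely many local configurations can arise at each step. Enumerating these configurations yields the rows of the table in the lemma and confirms that the discrepancy $c_i - c'_i \in \{0,1\}$ is preserved across the step, closing the induction.
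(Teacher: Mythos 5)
Your outline follows the same route as the paper: descending induction through the rewriting process, with the case analysis of Lemma \ref{whole string lemma} driving both the placement of $i$ in $T'$ and the bookkeeping of $c_i - c_i'$. But as written it is a plan rather than a proof, and the two substantive verifications are left undone. First, availability of the target row: you flag this as a Case-1 boundary issue to be settled by "enumerating local configurations," but never resolve it, and in fact no boundary analysis is needed. The paper's argument is uniform across Cases 1, 3, and 4: since $c_i' \leq c_i$ and the two diagrams have the same number of boxes, $\lambda'[i]$ has at least as many rows as $\lambda[i]$; and Lemma \ref{whole string lemma} gives $\ell(w'_{i-1}) = \ell(w_{i-1})$ in Cases 1 and 4 and $\ell(w'_{i-1}) = \ell(w_{i-1})-1$ in Case 3, so the row numbered $\ell(w'_{i-1})+1$ exists in $\lambda'[i]$ because the row numbered $\ell(w_{i-1})+1$ exists in $\lambda[i]$. (Also, the row need not be "available with the correct length" --- whichever column its last box occupies is what determines $c'_{i-1}$; that is the conclusion being tracked, not a constraint on placement.)

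Second, and more importantly, the table is the content of the lemma, and your proposal only asserts that enumeration "yields the rows of the table." The organizing observation is missing: the relation between $c_{i-1}$ and $c'_{i-1}$ differs from the relation between $c_i$ and $c_i'$ exactly when the box removed from $\lambda[i]$ and the box removed from $\lambda'[i]$ lie in different columns. Since $i$ lands either in the same-numbered row of $\lambda'[i]$ as of $\lambda[i]$ (Cases 1 and 4) or one row higher (Case 3), this happens in only two ways: $c_i = c_i'+1$ with $i$ at the bottom of the second column of $\lambda[i]$, i.e.\ $\ell(w_{i-1}) = c_i - 1$ (Cases 1 and 4); or $c_i = c_i'$ with the box in $\lambda'[i]$ one row higher and crossing the column boundary, i.e.\ $\ell(w'_{i-1}) = c_i - 1$ (Case 3). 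These are precisely the exceptional clauses in the table; without identifying them you have not derived it. Two smaller inaccuracies: your base-case claim that $\lambda'[j] = \lambda[j]$ for all $j \geq m$ fails at the index of the string you deleted from (the entry that string places moves to a weakly higher row, which is exactly where the dichotomy $c = c'$ or $c = c'+1$ first arises); and the shaded boxes are not "a contiguous block of second-column boxes" but the rightmost boxes of a contiguous block of rows, which may have length $1$ or $2$.
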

\begin{center}
\begin{tabular}{l|c|c}
& $c_i = c_i'$ & $c_i = c_i'+1$ \\
\cline{1-3} Case 1 & $c_{i-1}=c_{i-1}'$ & $c_{i-1} = c_{i-1}'+1$  \\	
 & &  unless $\ell(w_{i-1})=c_i-1$  \\
 & &  in which case $c_{i-1}=c_{i-1}'$ \\
\cline{1-3} Case 3 &  $c_{i-1} = c_{i-1}'$ & $c_{i-1}=c_{i-1}'+1$ \\
& unless $\ell(w'_{i-1})=c_i-1$ & \\
& in which case $c_{i-1}=c_{i-1}'+1$ & \\
\cline{1-3} Case 4  & $c_{i-1}=c_{i-1}'$ & $c_{i-1} = c_{i-1}'+1$ \\	
& & unless $\ell(w_{i-1})=c_i-1$ \\
& & in which case $c_{i-1}=c_{i-1}'$ \\
\end{tabular}
\end{center}

\begin{proof}  We start with the case when $i=n$.  After deleting a simple reflection from $w_{n-1}$ the box containing $n$ in $T'$ must move to a row above the row containing $n$ in $T$.  Either both boxes are in the same column so $c_{n-1}=c_{n-1}'$ or the boxes are in different columns so $c_{n-1}=c_{n-1}'+1$.  

Next we show that this process can be repeated, namely that we can put $i$ in the box of $T'[i]$ that is in bijection with the box corresponding to the string $w_{i-1}'$ in $\lambda'[i]$.  The diagram $\lambda'[i]$ has at least as many rows as $\lambda[i]$ since $c_i'\leq c_i$. In Cases 1 and 4 we know $\ell(w'_{i-1})=\ell(w_{i-1})$ while in Case 3 we know $\ell(w'_{i-1})=\ell(w_{i-1})-1$.  In both cases $\ell(w_{i-1}')\leq \ell(w_{i-1})$ so the diagram $\lambda'[i]$ has a box corresponding to $w_{i-1}'$.  

Finally we show that if $c_i=c_i'$ or $c_i=c_i'+1$ then either $c_{i-1}=c_{i-1}'$ or $c_{i-1} = c_{i-1}'+1$ in Cases~1, 3, and 4. Lemma \ref{whole string lemma} shows that either $i$ goes in the same row in $\lambda'[i]$ as in $\lambda[i]$ or in the row of $\lambda'[i]$ immediately above where $i$ went in $\lambda[i]$.  Consider the diagrams for $c_i=c_i'$ and $c_i=c_i'+1$, with an example of the latter sketched below.
\[
\lambda[i] = \scalebox{0.7}{$\begin{ytableau}  \empty& \empty\\ \empty & \empty\\ \empty & \empty\\\empty \\\empty\\ \end{ytableau}$} \quad \quad \quad
\lambda'[i] = \scalebox{0.7}{$\begin{ytableau}  \empty& \empty\\ \empty & \empty \\ \empty \\ \empty\\ \empty\\ \empty \\ \end{ytableau}$}
\]
The equation relating $c_i$ and $c_i'$ changes only when $i$ fills a box in a different column of $\lambda[i]$ than in $\lambda'[i]$.  There are only two ways this can happen:
\begin{itemize}
\item if $c_i = c_i'+1$ and the box containing $i$ is at the end of the second column of $\lambda[i]$ and just below the second column in $\lambda'[i]$ or
\item if $c_i = c_i'$ and we remove a higher box from $\lambda'[i]$ than from $\lambda[i]$.  
\end{itemize}
The first situation happens in Cases 1 and 4 when $\ell(w_{i-1})=c_i-1$ and the second happens in Case 3 when $\ell(w'_{i-1})=c_i-1$ as claimed.  This resolves all the cases in the table.  
 \end{proof}

\begin{example}  Let $T$ be the following row-strict tableau of shape $\lambda=(2^4, 1^3)$ 
\begin{center} 
\begin{ytableau} 1 & 2\\ 3 & 5\\ 4 & 10\\ 6 & 8\\ 7\\ 11\\ 9 \end{ytableau}
\end{center}
with associated Schubert point
\[
	w_T = s_6s_7 s_8s_9s_{10}\, s_8s_9\, s_4s_5s_6s_7s_8\, s_6s_7\, s_3s_4s_5s_6\, s_3s_4s_5\, s_4 \, s_2s_3\, s_2
\]
and let $w'$ be the permutation obtained by deleting $s_9$ from the initial string $w_{10}=s_6s_7 s_8s_9s_{10}$.  The table below shows the steps we take to simplify $w'$.  Notice that the process ends with the strings gluing at step $i=5$.  At each step the $\bigstar_i$-string is bold.

\begin{center}
\begin{tabular}{ c|c|c|c|c|c }
$i$ & $w'$ & $w_{i-1}$ & $c_i$ & $w_{i-1}'$ & $c_i'$\\ \hline
$11$ & ${\bf s_6s_7 s_8}\hat{s}_9s_{10}\, s_8s_9\, s_4s_5s_6s_7s_8\, s_6s_7\, s_3s_4s_5s_6\, s_3s_4s_5\, s_4 \, s_2s_3\, s_2$ & $s_6s_7 s_8s_9s_{10}$ & $4$ & $s_{10}$ & $4$\\
$10$ & $s_{10}\,{\bf s_6s_7 s_8}\, s_8s_9\, s_4s_5s_6s_7s_8\, s_6s_7\, s_3s_4s_5s_6\, s_3s_4s_5\, s_4 \, s_2s_3\, s_2$ & $s_8s_9$ & $4$ & $s_9$ & $3$\\
$9$ & $s_{10}\, s_9\,{\bf s_6s_7}\, s_4s_5s_6s_7s_8\, s_6s_7\, s_3s_4s_5s_6\, s_3s_4s_5\, s_4 \, s_2s_3\, s_2$ & $s_4s_5s_6s_7s_8$ & $3$ & $s_4s_5s_6s_7s_8$ & $2$\\
$8$& $s_{10}\, s_9\, s_4s_5s_6s_7s_8\,{\bf s_5s_6}\,  s_6s_7\, s_3s_4s_5s_6\, s_3s_4s_5\, s_4 \, s_2s_3\, s_2$ & $s_6s_7$ & $3$ & $s_7$ & $2$\\
$7$ & $s_{10}\, s_9\, s_4s_5s_6s_7s_8\,s_7\, {\bf s_5}\, s_3s_4s_5s_6\, s_3s_4s_5\, s_4 \, s_2s_3\, s_2$ & $s_3s_4s_5s_6$ & $2$ & $s_{3}s_4s_5s_6$ & $1$ \\
$6$ & $s_{10}\, s_9\, s_4s_5s_6s_7s_8\,s_7\, s_3s_4s_5s_6\, {\bf s_4} s_3s_4s_5\, s_4 \, s_2s_3\, s_2$ & $s_3s_4s_5$ & $2$ & $s_3s_4s_5$ & $1$\\
$5$ & $s_{10}\, s_9\, s_4s_5s_6s_7s_8\,s_7\, s_3s_4s_5s_6\, s_3s_4s_5 \,{\bf s_3}\, s_4 \, s_2s_3\, s_2$ & $s_4$ & $1$ & $s_3s_4$ & $1$\\
$4$ & $s_{10}\, s_9\, s_4s_5s_6s_7s_8\,s_7\, s_3s_4s_5s_6\, s_3s_4s_5 \, s_3s_4 \, s_2s_3\, s_2$ & $s_2s_3$ & $1$ & $s_2s_3$ & $1$
\end{tabular}
\end{center}
Case 3 is applied in steps 10 and 8 , Case 4 is applied in step 9, 7, and 6, and Case 2 is applied in step 5 after which the monotone-increasing strings of $w'$ match those of $w$.  Figure \ref{figure: row-strict example} lists the diagrams $T[i]$ and $\lambda[i]$ when $i$ is 10, 9, 8, 7, 6, and 5.  The shaded boxes correspond to the bold strings above.  The box of $w_i$ is bolded for each $i$.

\begin{figure}
\begin{tabular}{c|c|c|c|c|c|c||c}
$i$ & $10$ & $9$ & $8$ & $7$ & $6$ & $5$ & $T'$\\\hline
& & & & & & \\
$T[i]$ &
\begin{ytableau} 1 & 2\\ 3 & 5\\ *(grey)4 & *(grey) {\bf 10}\\ *(grey) 6 & *(grey) 8\\ *(grey) 7\\ \none \\  9 \end{ytableau}&
\begin{ytableau} 1 & 2\\ 3 & 5\\*(grey)  4 \\ *(grey) 6 & *(grey) 8\\  7\\ \none \\ {\bf  9} \end{ytableau} &
\begin{ytableau} 1 & 2\\ 3 & 5\\*(grey)  4 \\ *(grey) 6 & *(grey) {\bf 8}\\  7 \end{ytableau} & 
\begin{ytableau} 1 & 2\\ 3 & 5\\*(grey)  4 \\  6\\  {\bf 7} \end{ytableau} & 
\begin{ytableau} 1 & 2\\ 3 & 5\\*(grey)  4 \\  {\bf 6} \end{ytableau} & 
\begin{ytableau} 1 & 2\\ 3 & {\bf 5}\\*(grey)  4 \end{ytableau} & 
\begin{ytableau} 1 & 2\\ 3 & 11\\ 5& 10\\ 6 & 8\\ 7\\ 9\\ 4 \end{ytableau} \\
& & & & & & \\  \hline
& & & & & & \\
$\lambda[i]$ & 
\begin{ytableau}  \empty & \\  & \\ *(grey) & *(grey) {\bf 10} \\ *(grey)  & *(grey) \\ *(grey)  \\ \\  \end{ytableau}&
\begin{ytableau}  \empty & \\  & \\ *(grey)  & *(grey) \\*(grey) \\  \\ {\bf  9} \end{ytableau} &
\begin{ytableau}  \empty & \\  & \\*(grey)  & *(grey) {\bf 8}\\*(grey)   \\ \\   \end{ytableau} & 
\begin{ytableau}  \empty & \\  & \\*(grey)   \\  \\  {\bf 7} \end{ytableau} & 
\begin{ytableau}  \empty & \\  & \\*(grey)  \\  {\bf 6} \end{ytableau} & 
\begin{ytableau}  \empty & \\  & {\bf 5}\\*(grey) \end{ytableau} & 
\end{tabular}
\begin{caption}{Diagrams for $T[i]$ and $\lambda[i]$.} \label{figure: row-strict example}
\end{caption}
\end{figure}
\end{example}

In the previous example, notice that the first row of length one is shaded in $\lambda[n-1]=\lambda[10]$ and remains shaded in the rest of the steps.  The next lemma proves this holds more generally.

\begin{lem}\label{lemma: top row comes from top row}
Assume that the step from $\lambda[i]$ to $\lambda[i-1]$ is not the gluing step.  The first row of length one is shaded in $\lambda[i-1]$ only if the first row of length one is shaded in $\lambda[i]$.
\end{lem}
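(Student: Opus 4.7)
The plan is to proceed by contrapositive: assuming the first row of length one in $\lambda[i]$ is not shaded, I would show that the first row of length one in $\lambda[i-1]$ is also not shaded. The first step is to translate the geometric statement into an algebraic one using the row labels. The first row of length one in $\lambda[i]$ is the $(c_i+1)$-th row and carries the label $s_{i-c_i}$, while the first row of length one in $\lambda[i-1]$ carries the label $s_{i-1-c_{i-1}}$. Each is shaded precisely when the corresponding subscript lies in the interval $[p'_{i}, p_{i}]$ (respectively $[p'_{i-1}, p_{i-1}]$) determined by the $\bigstar$-string.

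Next I would break into the three non-gluing cases of Lemma \ref{whole string lemma}. Each case gives explicit formulas for $p'_{i-1}$ and $p_{i-1}$ in terms of $p'_i$ and $p_i$. Within each case I would further distinguish whether the box containing $i$ sits in a length-two row (so $c_{i-1} = c_i - 1$) or a length-one row (so $c_{i-1} = c_i$), the distinction being governed by whether $\ell_{i-1} \leq c_i - 1$ or $\ell_{i-1} \geq c_i$ since $i$ sits in the row labeled $s_{i-\ell_{i-1}}$, namely the $(\ell_{i-1}+1)$-th row from the top. After substituting these relations into the algebraic criterion, the implication ``$R$ unshaded implies $R'$ unshaded'' is immediate in almost every sub-case: either the interval and label shift cancel so that $R$ and $R'$ have the same shadedness, or the interval governing $R'$ is contained in the interval governing $R$.

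The hard part will be the two borderline sub-cases where a direct comparison leaves room for the shading status to change as we pass from $\lambda[i]$ to $\lambda[i-1]$. In Case 1 with $i$ in a length-one row, the only configuration for which $R'$ can become shaded while $R$ remains unshaded is $i - c_i = p_i + 1$, which forces $\ell_{i-1} \geq c_i = i - p_i - 1$ and contradicts the Case 1 hypothesis $\ell_{i-1} < i - p_i - 1$ from Lemma \ref{whole string lemma}. Likewise, in Case 4 with $i$ in a length-two row, the only problematic configuration is $i - c_i = p'_i - 1$, which forces $\ell_{i-1} \leq c_i - 1 = i - p'_i$ and contradicts the Case 4 hypothesis $\ell_{i-1} > i - p'_i$. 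Ruling out these two obstructions closes the remaining sub-cases and completes the contrapositive.
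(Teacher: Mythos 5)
Your proposal is correct and follows essentially the same route as the paper: both argue the contrapositive by running through the non-gluing cases of Lemma \ref{whole string lemma}, sub-split according to whether $i$ occupies a length-one or length-two row, and rule out exactly the two critical configurations (Case 1 with $i$ in the first column, Case 4 with $i$ in the second) by contradiction with the case hypotheses. The only difference is cosmetic: you phrase the shading criterion as the inequality $p'_{i} \leq i - c_i \leq p_{i}$ on row labels and compute with the table's formulas for $p'_{i-1}, p_{i-1}$, whereas the paper reasons pictorially about shaded boxes sliding or staying put; the checks you describe (e.g.\ that $i-c_i = p_i+1$ forces $\ell_{i-1} \geq i-p_i-1$, contradicting $p_i < i-\ell_{i-1}-1$) are accurate.
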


\begin{proof}
Assume that the first row of length one is not shaded in $\lambda[i]$.  There are two ways for the top row of length one to become shaded in $\lambda[i-1]$: either
\begin{itemize}
\item the shaded boxes slide up or 
\item the last row of length 2 was shaded, the shaded boxes stay in the same place, and $i$ is placed at the end of the second column of $\lambda[i]$, so $c_{i-1}=c_i-1$.
\end{itemize}

Shaded boxes only slide in Case 1.  In this case, the box containing $i$ is above and separated by at least one row from the shaded boxes.  If the box containing $i$ is in the first column then the shaded boxes are at least two rows below the second column.   Since the first row of length one is not shaded in $\lambda[i]$ either the bottommost shaded box is in a row of length 2 or the topmost shaded box must be at least one row below the top row of length one. The latter case is the only one where the top row of length one could become shaded, and it can only happen if $i$ is in a box in the second column in $\lambda[i]$.  Thus deleting the box containing $i$ from $\lambda[i]$ increases the number of rows of length one in $\lambda[i-1]$.  The shaded boxes slide up, but the additional row of length one above the shaded boxes means the top row of length one is not shaded.

Shaded boxes stay in the same place in Cases 3 and 4.  In Case 4 the box containing $i$ is strictly below the shaded boxes, which cannot occur if this box is in the second column and the last row of length 2 is shaded. So we are reduced to Case 3. In this case, the box containing $i$ is in the second column, so we delete this box leaving an additional shaded row of length one.  The assumption that the top row of length one is not shaded in $\lambda[i]$ implies that the box we just shaded in fact becomes the lowest shaded box.  We un-shade this box, which is precisely the top row of length one in $\lambda[i-1]$.
\end{proof}

The previous lemmas bring us directly to the main theorem.  The crux of the argument is that the only way that gluing could pose a problem is if both $\lambda'[i]=\lambda[i]$ and the shaded boxes contain the top row of length one.  We will show that that cannot happen under our hypotheses.

\begin{thm}
Assume that $\lambda$ is a Young diagram with two columns, that $T$ is a row-strict filling of $\lambda$, and that $w'$ is obtained from $w_T$ by deleting a simple reflection.  There exists a row-strict tableau $T'$ of shape $\lambda$ such that $w'=w_{T'}$.  
\end{thm}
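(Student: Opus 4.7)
The plan is to execute the algorithm sketched in Section \ref{section: outline}. Erasing the chosen simple reflection from $w_T$ produces a factorization $w' = w'_{n-1} \bigstar_{n-1} w_{n-2} \cdots w_1$, and I will iteratively apply Lemma \ref{whole string lemma} to commute the $\bigstar$-string past each successive $w_{i-1}$, producing monotone-increasing strings $w'_{n-2}, w'_{n-3}, \ldots$ and, at each step, a possibly modified $\bigstar_{i-1}$. The strings $w'_{i-1}$ dictate the placement of $i$ in a prospective tableau $T'$ using the two-column row labeling established at the start of this section. The goal is to show that the algorithm terminates, that each intermediate placement fits inside a valid two-column shape of the correct size, and that the final shape is $\lambda$.

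Termination follows from a simple monovariant argument on the length and position of the $\bigstar$-string: Case 3 strictly shortens it, Cases 1 and 4 preserve its length while shifting its top index down as $i$ decreases, and Case 2 absorbs it entirely into $w_{i-1}$. So the algorithm halts either via a single Case 2 firing or after a final Case 3 application exhausts the $\bigstar$-string. Once the $\bigstar$-string vanishes the remaining strings of $w'$ coincide with those of $w_T$, so the remaining boxes of $T'$ are placed identically to $T$. Thus it suffices to verify that every placement up to and including the termination step is valid, because then $\lambda'[n] = \lambda$ automatically.

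Lemma \ref{lemma: cases of c_i} already ensures that at every step before gluing $c'_i \in \{c_i,\, c_i - 1\}$, so $\lambda'[i]$ has the right number of boxes with a valid row available for $i$. The only step that can fail is the gluing step of Case 2, where $i$ is placed at the end of the bottommost shaded row of $\bigstar_i$. A routine case analysis on the conditions of Lemma \ref{whole string lemma} shows that in the two-column setting this placement is consistent with a row-strict filling of a two-column partition of size $i$ in every configuration \emph{except} when $\lambda'[i] = \lambda[i]$ and the topmost shaded row of $\bigstar_i$ is the top row of length one in $\lambda[i]$; in that configuration the glued string would place $i$ in a row whose column disagrees with the column in $\lambda$ that was slated to receive it, and the eventual shape could fail to match $\lambda$.

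The hard part is therefore ruling out this single problematic configuration. I would apply Lemma \ref{lemma: top row comes from top row} iteratively: the top row of length one being shaded in $\lambda[i]$ forces it to have been shaded in $\lambda[j]$ for every $j$ with $i \leq j \leq n-1$, and in particular in the initial diagram $\lambda[n-1]$. Direct inspection of the initial step, where $\bigstar_{n-1} = s_{n-\ell_{n-1}} \cdots s_{p_n}$ is determined by which reflection was deleted from $w_{n-1}$, shows that this shading occurs only when the deleted reflection lies above the second column of $\lambda[n-1]$, which is precisely the configuration forcing $n$ into column $1$ of $T'$ and hence $c'_{n-1} = c_{n-1} - 1$. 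Since Cases 1, 3, and 4 of Lemma \ref{lemma: cases of c_i} never increase $c'_j - c_j$, we must have $c'_i < c_i$ at the gluing step, contradicting $\lambda'[i] = \lambda[i]$. This rules out the problematic configuration and yields a row-strict tableau $T'$ of shape $\lambda$ with $w_{T'} = w'$, completing the proof.
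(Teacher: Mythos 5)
Your overall strategy matches the paper's: run the rewriting algorithm of Lemma \ref{whole string lemma}, reduce to the gluing step, identify the problematic configuration (top row of length one shaded while $\lambda'[i]=\lambda[i]$), and rule it out by tracing the shading back to $\lambda[n-1]$ via Lemma \ref{lemma: top row comes from top row}. But your final step contains a genuine gap. You claim that Cases 1, 3, and 4 of Lemma \ref{lemma: cases of c_i} ``never increase $c'_j-c_j$,'' so that $c'_{n-1}=c_{n-1}-1$ would force $c'_i<c_i$ at the gluing step. This monotonicity is false, and the table in Lemma \ref{lemma: cases of c_i} says so explicitly: in Cases 1 and 4, if $c_j=c_j'+1$ and $\ell(w_{j-1})=c_j-1$ (that is, $j$ fills the box at the bottom of the second column of $\lambda[j]$), then $c_{j-1}=c_{j-1}'$ --- the discrepancy collapses. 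The real content of the argument is precisely to show that this collapse cannot happen for any $j$ with $i\leq j\leq n-1$: since the top row of length one is shaded in $\lambda[i]$, Lemma \ref{lemma: top row comes from top row} forces it to be shaded in $\lambda[j]$ as well, and one then checks that in Case 1 the box containing $j$ must lie strictly above and separated from the shaded block (so that block cannot reach the top length-one row if $j$ is at the bottom of column two), while in Case 4 the box containing $j$ lies below the shaded block yet above the top length-one row, again a contradiction. Without this case analysis your contradiction does not go through.

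Two smaller points. First, your description of the initial step is backwards: when the top length-one row of $\lambda[n-1]$ is shaded, $n$ sits in the \emph{first} column of $T$ and the \emph{second} column of $T'$ (deleting a reflection shortens $w_{n-1}$, so $n$ moves \emph{up} into a length-two row of $T'$); your conclusion $c'_{n-1}=c_{n-1}-1$ is nevertheless the correct one. Second, after gluing you still need to convert the remaining tail $w_{i-2}\cdots w_1$, which by hypothesis fills $\lambda[i-1]$, into a filling of $\lambda'[i-1]$; when $c'_{i-1}=c_{i-1}-1$ this requires the dominance-order transfer of Lemma \ref{dominance ordering}, a step your sketch omits.
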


\begin{proof}  As in the proof of Theorem \ref{thm: three row}, we may assume $w'$ is obtained from $w_T$ by deleting a simple reflection from $w_{n-1}$ as in Equation \eqref{v-definition}, else the claim follows by induction on $n$.  If Case 2 (gluing) never applies then $w_{i-1}' \leq w_{i-1}$ for all $i$ by Lemma \ref{whole string lemma} and the claim holds by Lemma \ref{dissolving case}.  So suppose that Case 2 is used at the $i^{th}$ step, namely that the $\bigstar_i$-string glues with $w_{i-1}$.

By Lemma~\ref{lemma: cases of c_i}, we can fill a row-strict diagram $T'$ according to the length of $w_{k-1}'$ for each $k<i$.  At the $i$-th step, note that the lowest shaded row in $\lambda[i]$ is labeled by the simple reflection $s_{p_i'}$ and this is the simple reflection of lowest index in $w_{i-1}' = \bigstar_iw_{i-1}$.   Such a box exists because $\lambda'[i]$ has at least as many rows as $\lambda[i]$ since $c_i'\leq c_i$ by Lemma~\ref{lemma: cases of c_i}.  We then place $i$ in the box of $T'$ that corresponds to the rightmost box in the row labeled by $s_{p_i'}$ in $\lambda'[i]$. This is exactly the box corresponding to the string $w_{i-1}'$ in $\lambda'[i]$.  

The rest of this proof shows that after gluing we have $\lambda[i-1] \geq \lambda'[i-1]$.   The remaining monotone-increasing strings of $w'$ are $w'_{i-2}=w_{i-2}, w'_{i-3}=w_{i-3}, \ldots, w'_1=w_1$.  Lemma \ref{dominance ordering} shows that there is a row-strict filling of shape $\lambda'[i-1]$ corresponding to $w'_{i-2} w'_{i-3} \cdots w'_1$.  This allows us to complete the row-strict filling of $T'[i-1]$ using the bijection between the boxes of $\lambda'[i-1]$ and $T'[i-1]$.  We thus conclude that there is a row-strict tableau $T'$ of shape $\lambda$ corresponding to $w'$.
 
We now prove that $\lambda[i-1] \geq \lambda'[i-1]$ after gluing.  First suppose that the first row of length one in $\lambda[i]$ is not shaded.  In this case the shaded boxes are either all above or all below the top row of length one so gluing means that either
\begin{itemize}
\item the boxes containing $i$ in $\lambda[i]$ and $\lambda'[i]$ are both in the first column, or
\item the boxes containing $i$ in $\lambda[i]$ and $\lambda'[i]$ are both in the second column.
\end{itemize}
We know $c_i'\leq c_i$ or equivalently $\lambda'[i] \leq \lambda[i]$ so deleting a box from the same column in each diagram gives $\lambda'[i-1] \leq \lambda[i-1]$. 

Now suppose that the top row of length one in $\lambda[i]$ is shaded, implying that the box containing $i$ in $\lambda[i]$ is in the second column and the box containing $i$ in $\lambda'[i]$ is in the first column since we are in the gluing case.  If $c_i = c_i'+1$ then after deleting a box from the second column of $\lambda[i]$ and a box from the first column of $\lambda'[i]$ we obtain $c_{i-1}=c_{i-1}'$ so $\lambda[i-1]=\lambda'[i-1]$.  

By Lemma~\ref{lemma: cases of c_i} the only other option is that the top row of length one in $\lambda[i]$ is shaded and that $c_i=c_i'$ or equivalently $\lambda[i]=\lambda'[i]$.  We show that this is in fact impossible.  Indeed, Lemma \ref{lemma: top row comes from top row} says that the top row of length one in $\lambda[i]$ is shaded only if the top row in $\lambda[n-1]$ is shaded.  This means that the box containing $n$ is in the first column of $T$ and the box containing $n$ is in the second column of $T'$ so $c_{n-1}=c_{n-1}'+1$.  Thus there exists a $j$ with $i \leq j \leq n-1$ such that $c_j = c_j'+1$ while $c_{j-1}=c_{j-1}'$.  Inspecting the table in Lemma \ref{lemma: cases of c_i}, we see that this can only happen under very special circumstances.  

By Lemma \ref{lemma: top row comes from top row} the top row of length one in $\lambda[j]$ must be shaded.  The first possibility is that we are in Case 1 and the box containing $j$ is at the bottom of the second column of $\lambda[j]$.  But in Case 1 the box containing $j$ must be above the shaded boxes with at least one row between them, so the first row of length one cannot be shaded.  This contradicts our hypotheses.  The second possibility is that we are in Case 4 and the box containing $j$ is at the bottom of the second column of $\lambda[j]$.  But in Case 4, the shaded rows are above the box containing $j$, and the top row of length one is below this box so the first row of length one cannot be shaded which is again a contradiction.

This proves the case impossible, so after gluing $\lambda[i-1] \geq \lambda'[i-1]$ as desired.
\end{proof}


\section{Open Questions}\label{section: open questions}

We conclude with two open questions and thank an anonymous referee for bringing these to our attention.

\begin{question} Springer fibers are known to have a unimodal distribution of Betti numbers in both the two row and two column cases \cite{FM2}.  Can techniques involving Schubert points be used to give an alternative proof of this fact and extend unimodality to the three row case?
\end{question}
 
The arguments in this manuscript define a correspondence between irreducible components of the Springer fiber and particular Schubert varieties via the bijection between standard tableaux and Schubert points.  In general, the Betti numbers of a given irreducible component of $\B^X$ need not match those of the corresponding Schubert variety.  However, these numbers do agree in the two-row case, in which every irreducible component of the Springer fiber is smooth \cite{FM}.  This motivates the following question.

\begin{question} Do the Betti numbers of a smooth irreducible component of a given Springer fiber agree with those of the corresponding Schubert variety in the two-column and three row cases? 
\end{question}


\end{document}